\newtheorem{theorem}{Theorem}
\newtheorem{proposition}{Proposition}
\newcommand{\rmbf}[1]{\mathrm{\textbf{#1}}}
\newcommand{\midand}{\ \mathrm{and}\ }
\newcommand{\infinity}{\infty}
\newcommand{\middlebar}{\enskip \middle| \enskip}
\newcommand{\eqdot}{\ .}
\newcommand{\eqcom}{\ ,}
\newcommand{\N}{\mathbb{N}}
\newcommand{\Z}{\mathbb{Z}}
\newcommand{\R}{\mathbb{R}}
\newcommand{\lp}[1]{L^{#1}}
\newcommand{\hp}[1]{H^{#1}}
\newcommand{\hinv}{H^{-1}}
\newcommand{\ellnu}{\ell^1_\nu}
\newcommand{\norm}[1]{||#1||}
\newcommand{\normp}[2]{||#1||_{\lp{#2}}}
\newcommand{\normhp}[2]{||#1||_{\hp{#2}}}
\newcommand{\normX}[1]{||#1||_X}
\newcommand{\normnu}[1]{||#1||_{1,\nu}}
\newcommand{\normb}[1]{||#1||_{B(\ellnu)}}
\newcommand{\normbX}[1]{||#1||_{B(X)}}
\newcommand{\lapl}{\nabla^2}
\def\Xint#1{\mathchoice
{\XXint\displaystyle\textstyle{#1}}%
{\XXint\textstyle\scriptstyle{#1}}%
{\XXint\scriptstyle\scriptscriptstyle{#1}}%
{\XXint\scriptscriptstyle\scriptscriptstyle{#1}}%
\!\int}
\def\XXint#1#2#3{{\setbox0=\hbox{$#1{#2#3}{\int}$ }
\vcenter{\hbox{$#2#3$ }}\kern-.59\wd0}}
\def\dashint{\Xint-}
\newcommand{\pbar}{\bar{\psi}}
\newcommand{\abar}{\bar{a}}
\newcommand{\atil}{\widetilde{a}}
\begin{document}
\title{Microscopic Patterns in the 2D Phase-Field-Crystal Model}

\author{
  Gabriel Martine-La Boissoni\`ere\footnote{gabriel.martine-laboissoniere@mail.mcgill.ca}\\
  \and
  Rustum Choksi\footnote{rustum.choksi@mcgill.ca}
  \and
Jean-Philippe  Lessard\footnote{jp.lessard@mcgill.ca}
}

%\author{Gabriel Martine-La Boissoni\`ere,  Rustum Choksi, and Jean-Philippe Lessard}
\date{%
    Department of Mathematics and Statistics, McGill University, Montr\'{e}al, QC, Canada
}

\maketitle
\begin{abstract}
Using the recently developed theory of rigorously validated numerics, we address the Phase-Field-Crystal (PFC) model at the microscopic (atomistic) level.  We show the existence of critical points and local minimizers  associated with ``classical'' candidates, grain boundaries, and localized patterns.  We further address the dynamical relationships between the observed patterns for fixed parameters and across parameter space, then formulate several conjectures on the dynamical connections (or orbits) between steady states.
\end{abstract}

%%%%%%%%%%%%%%%%%%%%%%%%%%%%%%%%%%%%%%%%%%%%%%%%%%%%%
%%         Main Text Begins Here                   %%
%%%%%%%%%%%%%%%%%%%%%%%%%%%%%%%%%%%%%%%%%%%%%%%%%%%%%
\section{Introduction}
The Phase-Field-Crystal (PFC) model introduced in~\cite{ELDER_Elasticity} is a gradient system capable of modeling a variety of solid-state phenomena.  In its simplest form,  the PFC energy can be written as 
\begin{equation*}
E[\psi] = \dashint_\Omega \frac{1}{2} \left( \lapl \psi + \psi \right)^2 + \frac{1}{4} \left( \psi^2-\beta \right)^2
\end{equation*}
defined on phase-fields $\psi \in \hp{2}(\Omega)$ satisfying the \textit{phase constraint}
\begin{equation*}
\pbar = \dashint_\Omega \psi = \frac{1}{|\Omega|} \int_\Omega \psi \eqdot
\end{equation*}
The parameter $\beta$ represents inverse temperature such that $\beta = 0$ models maximum disorder. 
Coupled with this energy is its conservative $\hinv$ gradient flow  which entails the sixth-order PFC equation
\begin{equation*}
\psi_t = \lapl \left( \left( \lapl + 1 \right)^2 \psi + \psi^3 - \beta \psi \right).
\end{equation*} Note that the PFC model shares its energy with the Swift-Hohenberg equation~\cite{SWIFT_Hohenberg}, which is simply the $\lp{2}$ gradient flow of $E$. 
From linear stability analysis applied to single Fourier mode Ansatz, we find three main candidate global minimizers that divide parameter space, see the appendices. In the hexagonal lattice regime, 2D-simulations of the PDE starting with random noise quickly produce atoms that arrange into small patches of hexagonal lattices with random orientations. These patches grow 
and interact with each other, forming {\it grains} of hexagonal lattices  of atoms with a particular orientation. The morphology and evolution of these grains 
have  features resembling those in polycrystalline materials (cf.  Figure \ref{fig:grains}). 
\begin{figure}
	\centering
	\includegraphics[width=0.3\textwidth]{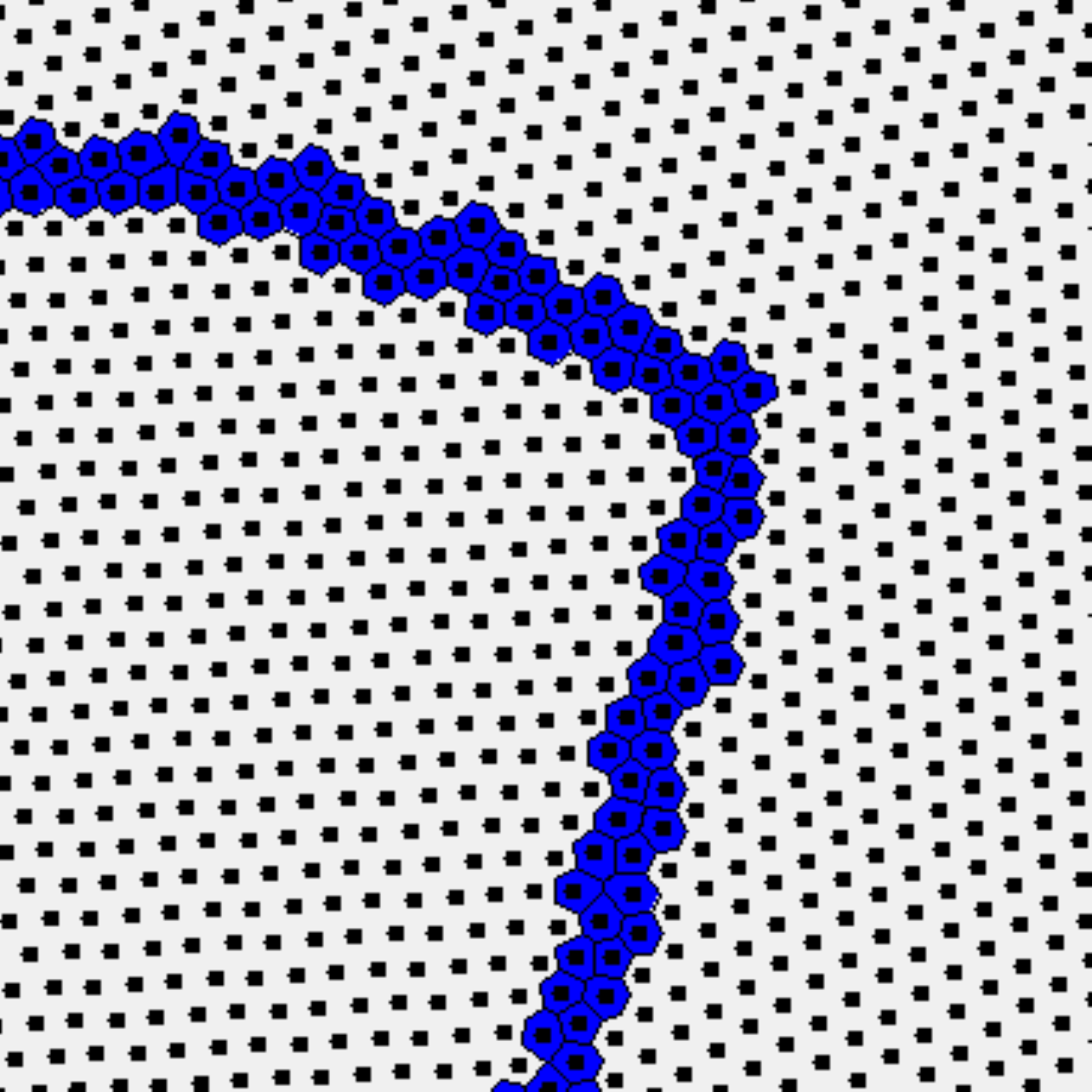}\qquad\qquad \qquad\qquad 
	\includegraphics[width=0.3\textwidth]{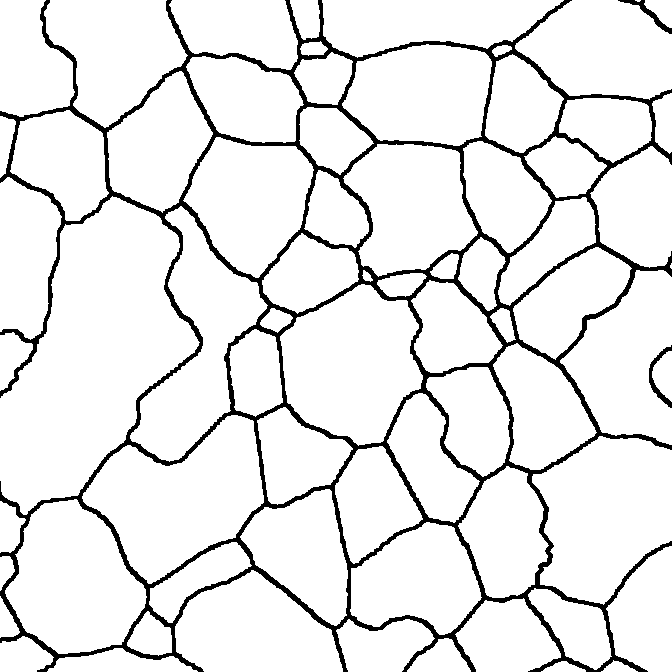}
	\caption{Left: Details of a grain boundary appearing in a PFC simulation (taken from \cite{MARTINE_AtomBased}). Right: 
Grain boundary network from a PFC simulation (taken from \cite{MARTINE_PFCStatistics}). Within each grain is a  hexagonal lattice of atoms with a particular orientation.}
	\label{fig:grains}
\end{figure}
In particular, it has recently been shown that statistics of many of experimentally observed (universal) grain boundary distributions are accurately captured by data amassed from simulations of this simple PFC equation
 \cite{BACKOFEN_GSD, MARTINE_PFCStatistics}.  While here we will mostly work  with this vanilla PFC formulation, we note that 
 a family of PFC-like equations can be derived from Density-Functional-Theory~\cite{EMMERICH_PFCReview} to obtain more complicated models capable of simulating eutectic and dendritic solidification~\cite{GREENWOOD_Eutectic} and graphene structures~\cite{SEYMOUR_Graphene, HIRVONEN_Graphene}.

In this article, we address  the PFC model and its steady states at the ``microscopic" level - the local atomic arrangement.  We believe that such an investigation of microscopic pattern-formation capabilities of PFC is not only of mathematical interest but is also necessary to construct ``designer'' models for polycrystalline behaviour.
For example, varying the parameters in the energy lead to more complicated states than simple lamellar and hexagonal. These include 
 localized patterns in the ``glassy regime" - the transition at the liquid (constant) and solid (hexagonal) transitions  -  and ``globules'' at large $\beta$.

With the exception of the constant (liquid) state (cf. \cite{SHIROKOFF_GlobalMinimality}), it is difficult to prove any theorem on the exact nature of steady states, local and global minimizers to this diffuse interface problem. 
What exists in the physics literature is numerical simulations, standard linear stability analysis, and Ansatz-driven energy comparisons. 
The recently developed theory of rigorously validated numerics (cf. ~\cite{KOCH_ComputeAssisted, NAKAO_VerifiedPDE, TUCKER_ValidatedIntroduction, VANDENBERG_Dynamics, GOMEZ_PDESurvey}) now provides a powerful new tool to 
bridge what can be observed numerically with rigorous statements on pattern morphology.  In a nutshell this approach can be summarized as follows: 
Given an approximate steady state, we use the {\it Contraction Mapping Theorem} to imply the existence and local uniqueness of an \textit{exact} steady state within a \textit{controlled distance} of the approximation. 
This notion of closeness is strong enough to imply further useful results, including closeness in energy and stability results. In this paper we use this new approach to address the following aspects of the PFC model:
\begin{itemize}
\item  Are the ``classical'' candidates obtained from linear stability analysis  close to actual local minimizers? 
\item Are the stable yet complicated patterns observed numerically indeed critical points in the PFC energy landscape? For example, are grain boundaries steady states or simply metastable states? 
\item What are the \textit{dynamical} relationships between the observed patterns for fixed parameters and across parameter space?
\end{itemize}
Based upon our results we formulate several conjectures on the connections (or orbits) between steady states. 
Taken as a whole, our work presents the first step into a rigorous analysis of the rich PFC energy landscape.

The outline of this  paper is as follows. We first setup the PFC equation in Fourier space and discuss the application of the framework of rigorous computations. We then verify the existence of 
important steady states of the PFC equation, including localized patterns and grain boundaries. With these states in hand,  we address the {\it energy landscape} of PFC with a discussion on conjectures for 
connections (or connecting orbits) between steady states. Finally, we presents results in one-parameter numerical continuation to outline some interesting features of the bifurcation diagram of PFC.

%%%%%%%%%%%%%%%%%%%%%%%%%%%%%%%%%%%%%%%%%%%%%%%%%%%%%%%%%%%%%%%%%%%%%%%%%%%%%%%%%%%%%%%%%%%%%%%%%%%%%%%%%%
%%%%%%%%%%%%%%%%%%%%%%%%%%%%%%%%%%%%%%%%%%%%%%%%%%%%%%%%%%%%%%%%%%%%%%%%%%%%%%%%%%%%%%%%%%%%%%%%%%%%%%%%%%
\section{PFC steady states in Fourier space}
We begin by writing the equation $\psi_t = 0$ in Fourier space to obtain a coupled system of equations for the Fourier coefficients of steady states. We will be slightly more general and consider functionals of the form 
\begin{equation*}
E[\psi] = \dashint_\Omega \frac{1}{2} (K \psi)^2 + \frac{1}{4}(\psi^2-\beta)^2
\end{equation*}
where $K$ is a linear differential operator $K$ acting on elements of a suitable function space. In particular,
\begin{equation*}
K = \begin{cases}
	\hfil \lapl + 1 \quad& \text{for the basic ``one-mode'' PFC model}\\
	(\lapl + 1) (\lapl + q^2) \quad& \text{for the ``two-mode'' PFC model~\cite{WU_TwoMode}}
\end{cases}
\end{equation*}
where $q$ is the secondary wavelength of two-mode PFC. Taking the $H^{-1}$ gradient flow of $E$, we obtain the PFC-like equation $\psi_t = \lapl \left( \left(K^2 - \beta \right) \psi + \psi^3 \right)$.

For simplicity, we let $\Omega$ be the rectangular domain $[0, L_x] \times [0, L_y]$ with periodic boundary conditions. We let
\begin{equation*}
L_x = \frac{4\pi}{\sqrt{3}} N_x \quad,\quad L_y = 4\pi N_y
\end{equation*}
where $N_x, N_y \in \N$ are the number of atoms lined up in the $x, y$-axes. The main parameters of the problem are then $(\pbar$, $\beta)$ and the domain size is given by $(N_x, N_y)$.

Let $a_\alpha$ be the Fourier coefficients of $\psi$ and let $(a_\alpha)_t$ be the time derivative. Inserting this expansion into the PFC equation results in an infinite system of equations of the form $(a_\alpha)_t = F_\alpha(a)$ thanks to orthogonality. The steady states may then be found numerically by solving $F(a) = 0$ up to some truncation order $M$. We will see later that it is imperative to isolate the zeros of $F$; the \textit{continuous} translational and rotational symmetries of PFC must then be broken. The simplest way to do so in this context is to also enforce Neumann boundary conditions. It is convenient to write $a_\alpha = a_{\alpha_1,\alpha_2}$ so that the symmetry and reality conditions become $a_{|\alpha_1|,|\alpha_2|} \in \R$.

This choice allows us to simplify a complex Fourier series into the cosine expansion
\begin{equation*}
\begin{split}
\psi(x, y) &= \sum_{\alpha \in \Z^2} a_\alpha \exp{\left( 2\pi i \frac{\alpha_1 x}{L_x} \right)} \exp{\left( 2\pi i \frac{\alpha_2 y}{L_y} \right)} \\
	&= \sum_{\alpha \in \N^2} W_\alpha a_\alpha \cos \left( \frac{2\pi \alpha_1}{L_x} x \right) \cos \left( \frac{2\pi \alpha_2}{L_y} y \right)
\end{split}
\end{equation*}
where $W$ is a \textit{weight matrix} defined by
\begin{equation*}
W_\alpha = \begin{cases}
	1 \quad& \text{if } \alpha = (0,0) \\
	2 \quad& \text{if } \alpha_1 = 0, \alpha_2 \neq 0 \text{ or } \alpha_1 \neq 0, \alpha_2 = 0 \\
	4 \quad& \text{otherwise} \eqdot
\end{cases}
\end{equation*}

The Fourier coefficients of $\lapl \psi$ are given by the elementwise product $L_\alpha a_\alpha$ where
\begin{equation*}
L_\alpha = -\left( \left( \frac{2\pi \alpha_1}{L_x} \right)^2 + \left( \frac{2\pi \alpha_2}{L_y} \right)^2 \right)
\end{equation*}
is the Fourier representation of the Laplacian. Inserting these expressions into the PFC equation and equating Fourier modes, we obtain
\begin{equation*}
(a_\alpha)_t = F_\alpha(a) = L_\alpha \left( \gamma_\alpha a_\alpha + (a * a * a)_\alpha \right)
\end{equation*}
where $*$ denotes the discrete convolution and the linear terms combining $K$ and $\beta$ are
\begin{equation*}
\gamma_\alpha = \begin{cases}
	\hfil \left( L_\alpha + 1 \right)^2 - \beta \quad& \text{for PFC}\\
	\left( L_\alpha + 1 \right)^2 \left( L_\alpha + q^2 \right)^2 - \beta \quad& \text{for two-mode PFC}  \eqdot
\end{cases}
\end{equation*}

Note that the $(0,0)$ Fourier component picks out the average phase so it is fixed to $\pbar$: this is consistent with $(a_{0,0})_t = 0$ thanks to $L_{0,0} = 0$. To keep track of the phase constraint directly in $F$, we replace its first trivial component by $F_{0,0} = a_{0,0} - \pbar$, resulting in:
\begin{equation*}
F_\alpha(a) = \begin{cases}
	a_{0,0} - \pbar \quad& \text{if } \alpha = (0,0) \\
	L_\alpha \left( \gamma_\alpha a_\alpha + (a * a * a)_\alpha \right) \quad& \text{otherwise} \eqdot
\end{cases}
\end{equation*}

The operator $F$ then represents the PFC dynamics in the sense that its zeros correspond to steady states of the PFC equation. A numerical advantage of the reduced expansion is that we effectively only have to compute a quarter of the full Fourier series. Obviously, this means we are not treating PFC in full generality over $\hp{2}$ and will have to address this later. As an aside, the equivalent $F$ for Swift-Hohenberg is simply $-(\gamma_\alpha a_\alpha + (a * a * a)_\alpha)$ hence its $(0,0)$ entry is nonzero and average phase is not conserved.

%%%%%%%%%%%%%%%%%%%%%%%%%%%%%%%%%%%%%%%%%%%%%%%%%%%%%%%%%%%%%%%%%%%%%%%%%%%%%%%%%%%%%%%%%%%%%%%%%%%%%%%%%%
%%%%%%%%%%%%%%%%%%%%%%%%%%%%%%%%%%%%%%%%%%%%%%%%%%%%%%%%%%%%%%%%%%%%%%%%%%%%%%%%%%%%%%%%%%%%%%%%%%%%%%%%%%
\section{Overview of rigorously validated numerics}
We present a brief overview of the recent framework of rigorously validated numerics for dynamical systems, see sources including~\cite{KOCH_ComputeAssisted, NAKAO_VerifiedPDE, TUCKER_ValidatedIntroduction, VANDENBERG_Dynamics} and~\cite{GOMEZ_PDESurvey} for a survey of techniques for PDEs.

Consider the Newton-like operator $T(a) = a - A F(a)$, where $A$ is a suitable inverse to the derivative $DF(a)$. On the one hand, if $T$ is a contraction on a closed ball, the contraction mapping theorem gives the existence and uniqueness of a zero of $F$ within this ball. On the other hand, the repeated application of $T$ (allowing $A$ to vary with $a$) should converge to this fixed point. We can then \textit{numerically} compute an approximate steady state $\abar$ for which $F(\abar) \approx 0$ up to numerical precision. If in addition we are able to show that $T$ is a contraction around $\abar$, then we immediately have the existence of an exact steady state $\atil$ close to $\abar$ in an appropriate metric. This relationship is made clear by the \textit{radii polynomial theorem}, so-called for reasons that will become clear shortly. To illustrate the method, we specialize the theorem to the case applicable to PFC, but see~\cite{DAY_ValidatedContinuation, HUNGRIA_RadiiPolynomial, BALASZ_RadiallySymmetric, VANDENBERG_RigorousChaos} for different approaches and~\cite{VANDENBERG_OhtaKawasaki, VANDENBERG_OhtaKawasaki2} for an application to 
Ohta-Kawasaki functional in 2D and 3D, respectively. Given Banach spaces $X, Y$, we use the notation $B(X, Y)$ for the space of bounded linear operators from $X$ to $Y$, $B(X) = B(X, X)$ and $B_r(a) \subset X$ for the open ball of radius $r$ around $a \in X$.

\begin{theorem}
Consider Banach spaces $X, Y$, a point $\abar \in X$ and let $A^\dagger \in B(X, Y)$, $A \in B(Y, X)$. Suppose $F : X \to Y$ is Fr\'echet differentiable on $X$ and $A$ is injective. In addition, suppose
\begin{equation*}
\begin{split}
\normX{AF(\abar)} &\leq Y_0 \\
\normbX{I - AA^\dagger} &\leq Z_0 \\
\normbX{A(DF(\abar) - A^\dagger)} &\leq Z_1 \\
\normbX{A(DF(b) - DF(\abar))} &\leq Z_2(r) r \enskip \forall b \in \overline{B_r(\abar)}
\end{split}
\end{equation*}
where $Y_0, Z_0, Z_1$ are positive constants and $Z_2$ is a positive polynomial in $r > 0$. Construct the \textit{radii polynomial}
\begin{equation}
\label{eq:RadiiPolynomial}
p(r) = Z_2(r) r^2 - (1 - Z_0 - Z_1) r + Y_0 \eqdot
\end{equation}

If $p(r_0) < 0$ for some $r_0 > 0$, then there exists a unique $\atil \in B_{r_0}(\abar)$ for which $F(\atil) = 0$.
\end{theorem}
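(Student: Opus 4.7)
The plan is to apply the Banach fixed point theorem to the Newton-like operator $T(a) = a - AF(a)$ on the closed ball $\overline{B_{r_0}(\abar)} \subset X$, and then convert the resulting fixed point of $T$ into a zero of $F$ using injectivity of $A$. The roles of the four hypotheses, once one writes out $T(b) - \abar$ by adding and subtracting $AA^\dagger(b - \abar)$ and $ADF(\abar)(b - \abar)$, become transparent: $Y_0$ controls how well $\abar$ already solves $F = 0$; $Z_0$ and $Z_1$ measure how close $A^\dagger$ and $A$ are to $DF(\abar)$ and a left-inverse of $DF(\abar)$ respectively; and $Z_2(r)$ encodes the Lipschitz-like variation of $A\,DF$ near $\abar$. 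The radii polynomial $p(r)$ is designed so that $p(r) < 0$ simultaneously encodes self-mapping and contraction on $\overline{B_r(\abar)}$.

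For the self-mapping step, I would use the fundamental theorem of calculus together with the telescoping insertion above to obtain, for $b \in \overline{B_{r_0}(\abar)}$,
\begin{equation*}
T(b) - \abar = -AF(\abar) + (I - AA^\dagger)(b - \abar) + A(A^\dagger - DF(\abar))(b - \abar) + A \int_0^1 \bigl( DF(\abar) - DF(\abar + t(b - \abar)) \bigr)(b - \abar)\, dt.
\end{equation*}
Because $\abar + t(b - \abar)$ stays in $\overline{B_{r_0}(\abar)}$ by convexity, applying the four hypotheses termwise and using $\normX{b - \abar} \leq r_0$ yields
\begin{equation*}
\normX{T(b) - \abar} \leq Y_0 + (Z_0 + Z_1) r_0 + Z_2(r_0) r_0^2 = p(r_0) + r_0 < r_0,
\end{equation*}
so $T$ sends $\overline{B_{r_0}(\abar)}$ strictly inside itself. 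An entirely parallel decomposition of $T(b_1) - T(b_2)$ for $b_1, b_2 \in \overline{B_{r_0}(\abar)}$ gives $\normX{T(b_1) - T(b_2)} \leq (Z_0 + Z_1 + Z_2(r_0) r_0)\, \normX{b_1 - b_2}$, and rewriting $p(r_0) < 0$ as $(1 - Z_0 - Z_1 - Z_2(r_0) r_0) r_0 > Y_0 \geq 0$ shows that this contraction rate is strictly below one.

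The Banach fixed point theorem then produces a unique fixed point $\atil$ of $T$ in $\overline{B_{r_0}(\abar)}$, which in fact lies in the open ball $B_{r_0}(\abar)$ by the strict inequality above. Since $T(\atil) = \atil$ is equivalent to $AF(\atil) = 0$, the injectivity of $A$ yields $F(\atil) = 0$, and uniqueness among zeros of $F$ in $B_{r_0}(\abar)$ is automatic because any zero of $F$ there is a fixed point of $T$. There is no deep obstacle in the argument itself; it is a Newton--Kantorovich statement whose only subtle point is recognizing that the common sufficient condition $Y_0 + (Z_0 + Z_1) r + Z_2(r) r^2 < r$ for self-mapping already forces contraction. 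The genuine difficulty in applying the theorem lies elsewhere, namely in producing sharp, computer-verifiable bounds for $Y_0, Z_0, Z_1, Z_2$ in the concrete Banach space of Fourier coefficients used to encode PFC steady states.
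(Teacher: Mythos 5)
Your proposal is correct and follows essentially the same route as the paper: the Newton-like operator $T(a)=a-AF(a)$, the telescoping decomposition through $I-AA^\dagger$, $A(A^\dagger-DF(\abar))$ and $A(DF(\abar)-DF(\cdot))$, and the observation that $p(r_0)<0$ forces both the self-mapping bound $p(r_0)+r_0<r_0$ and the contraction constant $Z_0+Z_1+Z_2(r_0)r_0<1$. The only cosmetic difference is that you integrate $DF$ along the segment via the fundamental theorem of calculus, while the paper bounds $\normbX{DT}$ uniformly on the ball and invokes the mean value inequality; both yield identical estimates.
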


The proof of this formulation is given in appendix~\ref{sm:RadiiPolyProof}, where we show a correspondence between the sign of the radii polynomial and the contraction constant of $T$: if $r_0$ can be found, $T$ is a contraction and the Newton iteration starting at $\abar$ must converge to some $\atil$. This proves not only the existence of the exact steady states but also gives control on its location in $X$ with respect to a known point. In practice, one finds an interval $[r_*, r^*]$ of radii for which $p(r)$ is negative; $r_* > 0$ gives the \textit{maximum} distance between $\abar \midand \atil$ while $r^* > r_*$ gives the \textit{minimum} distance between $\abar$ and \textit{another} zero of $F$. The zeros of $F$ must therefore be isolated for consistency.

Each bound may be understood intuitively: $Y_0$ being small indicates that $\abar$ is a good approximation of $\atil$ while $Z_1$ being small indicates that $A^\dagger$ is a good approximation for $DF(\abar)$, and so on. These bounds may be simplified analytically but must necessarily be computed numerically. Therefore, we ensure that our numerical computations go in the same direction as the required inequalities by using interval arithmetic~\cite{MOORE_IntervalAnalysis}, a formalized approach to deal with numerical errors. We used the interval arithmetic package INTLAB for MATLAB, see~\cite{RUMP_INTLAB, HARGREAVES_IntervalMATLAB}, to ensure that the radii polynomial approach is numerically rigorous.

This approach allows us to prepare numerical tools that can both find candidate steady states and compute the radii $r_*, r^*$ if they exist. If so, we immediately have a \textit{proof} that this candidate provides a good handle on an actual steady state of the PFC equation.

%%%%%%%%%%%%%%%%%%%%%%%%%%%%%%%%%%%%%%%%%%%%%%%%%%%%%%%%%%%%%%%%%%%%%%%%%%%%%%%%%%%%%%%%%%%%%%%%%%%%%%%%%%
%%%%%%%%%%%%%%%%%%%%%%%%%%%%%%%%%%%%%%%%%%%%%%%%%%%%%%%%%%%%%%%%%%%%%%%%%%%%%%%%%%%%%%%%%%%%%%%%%%%%%%%%%%
\section{Radii polynomial approach for PFC}
\label{sec:NewtonOperatorPFC}
Let us now apply these ideas to PFC by first computing $DF$ and the Newton operator. Let $\sigma$ represent the differentiation indices applied to $F_\alpha$. The derivative of $F_{0,0}$ is $1$ if $\sigma = (0,0)$ and $0$ otherwise, so we use the Kronecker delta notation to write
\begin{equation*}
\partial_{a_\sigma} F_{0,0} = \delta_{\sigma_1} \delta_{\sigma_2} \eqdot
\end{equation*}

For other values of $\alpha$, the linear terms similarly give
\begin{equation*}
\partial_{a_\sigma} \left( L_\alpha \gamma_\alpha a_\alpha \right) = L_\alpha \gamma_\alpha \delta_{\sigma_1-\alpha_1} \delta_{\sigma_2-\alpha_2} \eqdot
\end{equation*}

The derivative of the nonlinear triple convolution can be computed by differentiating with respect to all four $a_\alpha$ identified by symmetry. This algebraic computation is somewhat tedious but the result can be written succinctly as
\begin{equation*}
\begin{split}
\partial_{a_\sigma} (a*a*a)_\alpha = \frac{3 W_\sigma}{4} \Big( &(a*a)_{|\alpha_1+\sigma_1|, |\alpha_2+\sigma_2|} + (a*a)_{|\alpha_1+\sigma_1|, |\alpha_2-\sigma_2|}\\
	&\enskip + (a*a)_{|\alpha_1-\sigma_1|, |\alpha_2+\sigma_2|} + (a*a)_{|\alpha_1-\sigma_1|, |\alpha_2-\sigma_2|} \Big)
\end{split}
\end{equation*}
so that the full derivative of $F$ is:
\begin{align*}
\left[DF \right]_{\sigma, \alpha} (a) &= (\partial_{a_\sigma} F_\sigma)(a) \\
& = \begin{cases}
\delta_{\sigma_1} \delta_{\sigma_2} \quad& \text{if } \alpha = (0,0) \\
L_\alpha \left( \gamma_\alpha \delta_{\sigma_1-\alpha_1} \delta_{\sigma_2-\alpha_2} + \partial_{a_\sigma} (a*a*a)_\alpha \right) \quad& \text{otherwise}  \eqdot
\end{cases}
\end{align*}

$a, F$ and the convolutions may be viewed as infinite matrices whose ``top-left'' entry is the $(0,0)$ coefficient while the derivative $DF$ is an infinite 4-tensor. To implement the Newton method numerically, such objects must be truncated to order $M$ such that $a_\sigma = 0$ whenever \textit{either} $\sigma_1$ or $\sigma_2$ is greater than $M$. This results in the $(M+1)^2$ matrices $a^{(M)}, F^{(M)}$ while the derivative becomes the $(M+1)^4$ 4-tensor $DF^{(M)}$. Note that the $k$-convolution of $a^{(M)}$ has support $kM$ by definition.

We now introduce the Banach space framework. Let $\nu > 1$ and define $\ellnu(\Z^2)$ as the space of sequences $a_\alpha$ with finite norm
\begin{equation*}
\normnu{a} = \sum_{\alpha \in \Z^2} |a_\alpha| \nu^{|\alpha|} = \sum_{\alpha \in \Z^2} |a_\alpha| \nu^{|\alpha_1|+|\alpha_2|} \eqdot
\end{equation*}

The restriction of $\ellnu(\Z^2)$ using the symmetry condition is
\begin{equation*}
X = \left\{ a \in \ellnu(\Z^2) \middlebar a_\alpha = a_{|\alpha_1|, |\alpha_2|} \right\}
\end{equation*}
over which the norm simplifies to
\begin{equation*}
\normnu{a} = \sum_{\alpha \in \N^2} W_\alpha |a_\alpha| \nu^{|\alpha|} = \sum_{\alpha \in \N^2} |a_\alpha| \nu_\alpha
\end{equation*}
where $\nu_\alpha$ is a weight matrix that forces the fast exponential decay of the Fourier coefficients. The space $(X, \normnu{\cdot})$ can easily be shown to be Banach and the 2D discrete convolution forms a Banach algebra over it, immediate results from the triangle inequality and the fact that $\nu > 1$.

Let now $\abar, \atil \in X$ have the same meaning as before, with $\abar = 0$ outside of $U = \{0, 1, ..., M\}^2$ thanks to the truncation. Let $G = DF(\abar)^{(M)}$ and denote by $A^{(M)}$ the numerical inverse of $G$. We define \textit{approximate} operators $A^\dagger, A$ as
\begin{equation*}
A_{\alpha, \sigma}^\dagger = \begin{cases}
	G_{\alpha, \sigma} \quad& \text{if } \alpha, \sigma \in U \\
	L_\alpha \gamma_\alpha \quad& \text{if } \alpha = \sigma, \alpha \in \N^2 \backslash U \\
	0 \quad& \text{otherwise,}
\end{cases}
\quad
A_{\alpha, \sigma} = \begin{cases}
	A^{(M)}_{\alpha, \sigma} \quad& \text{if } \alpha, \sigma \in U \\
	L^{-1}_\alpha \gamma^{-1}_\alpha \quad& \text{if } \alpha = \sigma, \alpha \in \N^2 \backslash U \\
	0 \quad& \text{otherwise}
\end{cases}
\end{equation*}
which can be thought of as block tensors containing $G$ or its inverse paired with the \textit{linear terms} $L_\alpha \gamma_\alpha$ as the \textit{main ``diagonal''} of the second block. If $G$ is an invertible matrix,\footnote{The numerical method will fail if $G$ is almost singular, so this is the case in practice.} so is $A$ and it is thus injective. The inverse of $A$ is \textit{not} $A^\dagger$ however because $A^{(M)} G \approx I^{(M)}$ only up to numerical inversion errors.

Note that $F, DF \midand A^\dagger$ map to a space $Y$ with less regularity than $X$ because of the unbounded $L_\alpha \gamma_\alpha$ terms arising from \textit{real space derivatives}; $Y$ is a space where sequences $L_\alpha \gamma_\alpha a_\alpha$ have finite norm. However, the operator products against $A$ \textit{are} bounded on $X$ thanks to the \textit{fast} decay of $L_\alpha^{-1} \gamma_\alpha^{-1}$. Thus, we say that $A$ ``lifts'' the regularity of the other operators back to $X$, allowing statements such as $T : X \to X$ or $ADF(\abar) \in B(X)$.

We show in appendix~\ref{sm:SimplifiedBounds} how to simplify the bounds into expressions that can be evaluated numerically. This allows us to write down the radii polynomial $p(r) = Z_2(r) r^2 - (1 - Z_0 - Z_1) r + Y_0$, noting that $Z_2(r) = Z_2^{(0)} + Z_2^{(1)} r$, hence the polynomial is cubic with non-negative coefficients except for maybe the linear term. We have $p(0) > 0$, $p'(0) = Z_0+Z_1-1$ and $p(r) \to \infinity$ for large $r$. As a consequence, if $p$ is strictly negative for some positive $r$, there must exist exactly two strictly positive roots $r_* < r^*$ defining the interval where the proof is applicable. When this is satisfied, the radii polynomial theorem gives that
\begin{itemize}
	\item[\textbf{1.}] There exists an exact solution $\atil$ of $F(a) = 0$ in $B_{r_*}(\abar)$.
	\item[\textbf{2.}] This solution is unique in $\overline{B_{r^*}(\abar)}$.
\end{itemize}

Thus, when the radii polynomial is computed using interval arithmetic and has exactly two real non-negative roots, the zero computed numerically with the Newton iteration is close to an actual steady state of the PFC equation. Note the important fact that the ball is in $X$ so a priori, only the Fourier coefficients are controlled. Thanks to $\nu > 1$ however, we show in appendix~\ref{sm:RigorousEnergy} that this control translates into closeness in energy and in real space norms. In particular, the distance \textit{in value} between the phase fields corresponding to $\abar \midand \atil$ is at most $r_*$.

Further, we show in appendix~\ref{sm:RigorousStability} that the stability of $\atil$ in $X$ is controlled by the eigenvalues of $G$. It is important to observe that this matrix will always have a positive eigenvalue because of the trivial condition $F_{0,0} = a_{0,0} - \pbar$. This is \textit{not} indicative of instability in the context of the $\hinv$ gradient flow because $a_{0,0}$ is fixed. We shall see later that this unstable direction can be used to compute a \textit{branch of solutions} in parameter continuation. For now however, we call the number of positive eigenvalues, minus $1$, the \textit{Morse index} of $\atil$, indicating how many unstable directions are available to a given steady state for \textit{fixed} parameters.

The procedure to numerically investigate the steady states of the PFC equation is as follows:
\begin{itemize}
	\item
 Starting from a given initial condition, the Newton iteration is run until it converges up to numerical precision. 
 \item Then, the radii polynomial of the numerical guess is computed and its roots are tested. 
 \item If the proof succeeds, we can characterize an exact steady state in value, in energy and compute its stability in $X$. The parameters $(M, \nu)$ can be adjusted until the proof succeeds with a trade-off between the computational effort and closeness in $X$.
\end{itemize}

%%%%%%%%%%%%%%%%%%%%%%%%%%%%%%%%%%%%%%%%%%%%%%%%%%%%%%%%%%%%%%%%%%%%%%%%%%%%%%%%%%%%%%%%%%%%%%%%%%%%%%%%%%
%%%%%%%%%%%%%%%%%%%%%%%%%%%%%%%%%%%%%%%%%%%%%%%%%%%%%%%%%%%%%%%%%%%%%%%%%%%%%%%%%%%%%%%%%%%%%%%%%%%%%%%%%%
\section{Rigorous results on small domains}
\label{sec:RigorousResultsSmallDomain}
We now have a complete framework for finding verified steady states along with their energetic and stability properties. This allows us to understand the behavior of the PFC system for a given choice of $(\pbar, \beta)$, with three important caveats: 
\begin{itemize}
	\item We cannot guarantee that we have found \textit{all} steady states and therefore \textit{the} global minimizer. Indeed, we may only hope to cover a reasonable portion of the underlying space by sampling initial conditions randomly.
	\item The size of $M$ must be balanced with $\nu$ to keep $r_*$ as small as possible, keeping in mind that $r^*$ is ultimately bounded above by the distance between two steady states. In particular, large domains and large $\beta$ increase the contribution of high frequency Fourier modes, hence the truncation order can become large even for domains containing only $100$ atoms. This limits our results to small domains so our analysis is ``small scale'' in nature.
	\item The Neumann boundary conditions restrict us to a ``quadrant'' of $\hp{2}$. While the existence of a steady state, the energy bound and \textit{instability} obviously extend to $\hp{2}$, stability \textit{does not} as there may be unstable directions in the other three Fourier series that are missed by the current method.
\end{itemize}

For the last point, we sometimes observe that translational shifts have a different Morse index in $X$. This is observed for example with the stripes states, see Fig.~\ref{fig:ConnectionDiagramEnergy} (a). In this sense, we only provide a \textit{lower bound} for Morse indices in $\hp{2}$.

%-------------------------------------------------------------------------------------------------------
\subsection{Verification of the candidate minimizers}
The candidate global minimizers (constant, stripes, atoms and donuts states) introduced in appendix~\ref{sm:PFCAnsatz} have trivial Fourier coefficients by construction, given by
\begin{equation*}
\begin{split}
\text{Constant:}&\enskip a_{0,0} = \pbar \\
\text{Stripes:}&\enskip a_{0,0} = \pbar \quad a_{0,2N_y} = \frac{1}{2}A_s \\
\text{Hexagonal:}&\enskip a_{0,0} = \pbar \quad a_{N_x,N_y} = \frac{1}{2}A_h \quad a_{0,2N_y} = \frac{1}{2}A_h
\end{split}
\end{equation*}
where $A_s, A_h$ represent amplitudes that optimize the PFC energy calculation. Note that $A_h$ differs between the atoms and donuts states.

To illustrate the approach, we first applied the verification program starting at the atoms state $b$ constructed for $(\pbar, \beta) = (0.07, 0.025)$, $(N_x, N_y) = (4, 2)$ and $M = 20$. The Newton iteration was used to obtain $\bar{b}$ for which the radii polynomial was tested with $\nu = 1.05$, resulting in $r_* = 1.0 \cdot 10^{-11} \midand r^* = 6.8 \cdot 10^{-3}$. The $\ellnu$ distance between $b$ and $\bar{b}$ is $1.1 \cdot 10^{-3}$, indeed smaller than $r^*$.

The difference $b - \bar{b}$ is mainly captured by \textit{new} Fourier modes: we find that the main Fourier coefficients $b_{N_x, N_y} = b_{0, 2N_y} = -4.4 \cdot 10^{-2}$ differ by $1.5 \cdot 10^{-5}$ while the largest \textit{new} Fourier modes are $b_{8,0} = b_{4,5} = -7.4 \cdot 10^{-5}$. Moreover, the distance in the (numerical) sup norm between the two \textit{phase fields} is approximately $4.4 \cdot 10^{-4}$ which is again smaller than the $\ellnu$ distance, consistent with the $\lp{\infinity}$ bound.

This approach was repeated for the other candidates and for a few other choices of the PFC parameters in the hexagonal regime, with truncation adjusted to $\beta$. The results are presented in Table~\ref{tab:DistanceFromCandidate}, showing that such simple candidates capture well the leading behavior. Note that the agreement decreases with increasing $\beta$: compare the size of $\normnu{a-\abar}$ to $\normnu{\abar}$.

\begin{table}
\begin{center}
\caption[Steady states from ansatz]{\label{tab:DistanceFromCandidate} Data for selected values of $(\pbar, \beta)$ on the exact steady states $\atil$ near the numerical approximation $\abar$, obtained from the original candidate $a$. $M = {20, 30, 40}$ for each parameter set respectively. The Morse index was verified in $X$. We write $<\epsilon$ when the number was numerically computed as $0$. $E_0$ denotes the energy of the constant state.}
\renewcommand{\arraystretch}{0.8}
\setlength{\extrarowheight}{0.5cm}
\begin{tabular}{cccccc}
\Xhline{3\arrayrulewidth}
Ansatz	&	$(\pbar, \beta)$	&	\makecell{$\normnu{\abar}$\\$\normnu{a-\abar}$}	&	\makecell{$r_*$\\$r^*$}	&	\makecell{$E[\abar]-E_0$\\$|E[\abar]-E[\atil]|$}	&	\makecell{Morse\\index}	\\
\hline
\multirow{3}{*}[1.5em]{\makecell{\\\\Constant\\ \begin{minipage}{0.12\textwidth}\includegraphics[width=\linewidth]{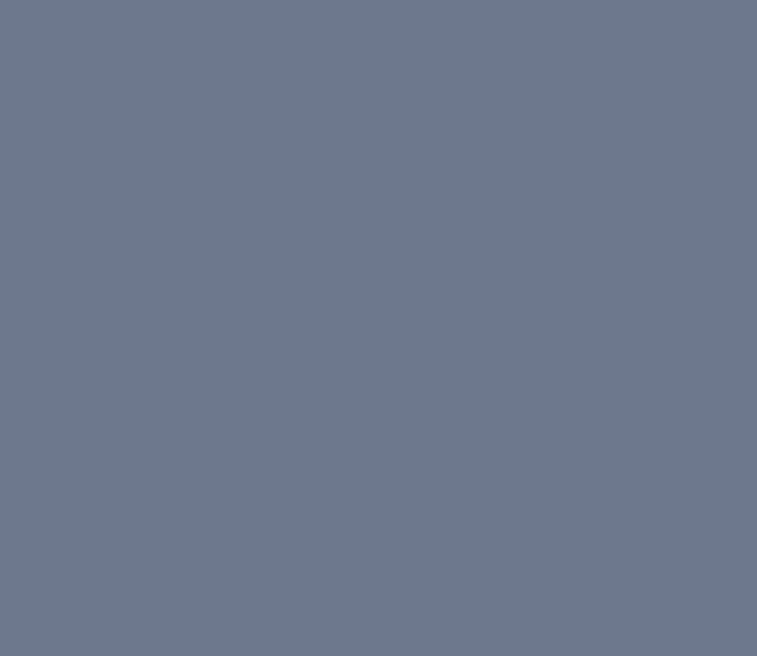}\end{minipage}}}&$(0.07, 0.025)$&\makecell{\vspace*{0.05cm}$0.07$\\$<\epsilon$}&\makecell{\vspace*{0.05cm}$4.3\cdot10^{-16}$\\$1.7\cdot10^{-2}$}&\makecell{$4.337\cdot10^{-19}$\\$4.9\cdot10^{-17}$}&4\\
\cline{2-6}
&$(0.3, 0.5)$&\makecell{\vspace*{0.05cm}$0.30$\\$<\epsilon$}&\makecell{\vspace*{0.05cm}$1.1\cdot10^{-14}$\\$1.9\cdot10^{-2}$}&\makecell{$1.388\cdot10^{-17}$\\$1.5\cdot10^{-14}$}&16\\
\cline{2-6}
&$(0.5, 1.0)$&\makecell{\vspace*{0.05cm}$0.50$\\$<\epsilon$}&\makecell{\vspace*{0.05cm}$7.2\cdot10^{-15}$\\$5.3\cdot10^{-3}$}&\makecell{$<\epsilon$\\$2.6\cdot10^{-14}$}&16\\
\hline
\multirow{3}{*}[1.5em]{\makecell{\\\\Stripes\\ \begin{minipage}{0.12\textwidth}\includegraphics[width=\linewidth]{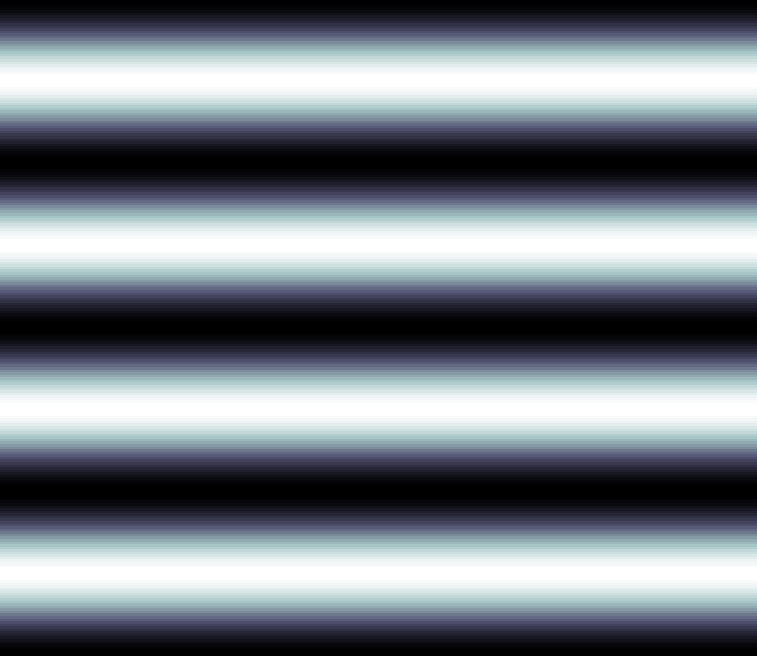}\end{minipage}}}&$(0.07, 0.025)$&\makecell{\vspace*{0.05cm}$0.21$\\$4.8\cdot10^{-4}$}&\makecell{\vspace*{0.05cm}$7.3\cdot10^{-13}$\\$5.7\cdot10^{-3}$}&\makecell{$-1.774\cdot10^{-5}$\\$4.8\cdot10^{-13}$}&0\\
\cline{2-6}
&$(0.3, 0.5)$&\makecell{\vspace*{0.05cm}$1.02$\\$4.5\cdot10^{-2}$}&\makecell{\vspace*{0.05cm}$1.9\cdot10^{-11}$\\$1.2\cdot10^{-3}$}&\makecell{$-9.369\cdot10^{-3}$\\$8.3\cdot10^{-11}$}&7\\
\cline{2-6}
&$(0.5, 1.0)$&\makecell{\vspace*{0.05cm}$1.31$\\$1.1\cdot10^{-1}$}&\makecell{\vspace*{0.05cm}$1.2\cdot10^{-12}$\\$1.7\cdot10^{-2}$}&\makecell{$-1.241\cdot10^{-2}$\\$9.1\cdot10^{-12}$}&11\\
\hline
\multirow{3}{*}[1.5em]{\makecell{\\\\Atoms\\ \begin{minipage}{0.12\textwidth}\includegraphics[width=\linewidth]{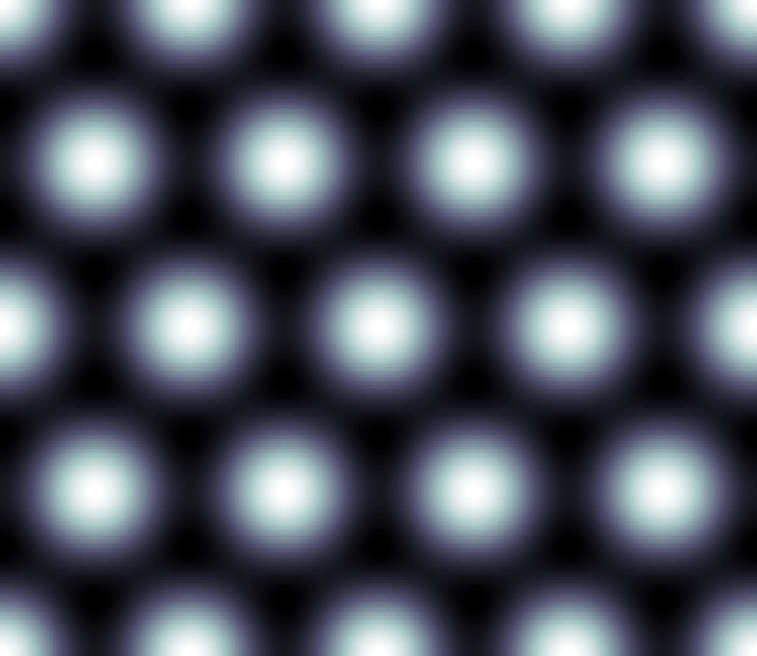}\end{minipage}}}&$(0.07, 0.025)$&\makecell{\vspace*{0.05cm}$0.41$\\$1.1\cdot10^{-3}$}&\makecell{\vspace*{0.05cm}$1.0\cdot10^{-11}$\\$6.8\cdot10^{-3}$}&\makecell{$-4.714\cdot10^{-5}$\\$1.5\cdot10^{-11}$}&0\\
\cline{2-6}
&$(0.3, 0.5)$&\makecell{\vspace*{0.05cm}$1.92$\\$8.1\cdot10^{-2}$}&\makecell{\vspace*{0.05cm}$5.5\cdot10^{-10}$\\$4.5\cdot10^{-3}$}&\makecell{$-2.089\cdot10^{-2}$\\$8.1\cdot10^{-9}$}&0\\
\cline{2-6}
&$(0.5, 1.0)$&\makecell{\vspace*{0.05cm}$2.79$\\$2.4\cdot10^{-1}$}&\makecell{\vspace*{0.05cm}$1.3\cdot10^{-11}$\\$2.9\cdot10^{-3}$}&\makecell{$-5.897\cdot10^{-2}$\\$4.4\cdot10^{-10}$}&0\\
\hline
\multirow{3}{*}[1.5em]{\makecell{\\\\Donuts\\ \begin{minipage}{0.12\textwidth}\includegraphics[width=\linewidth]{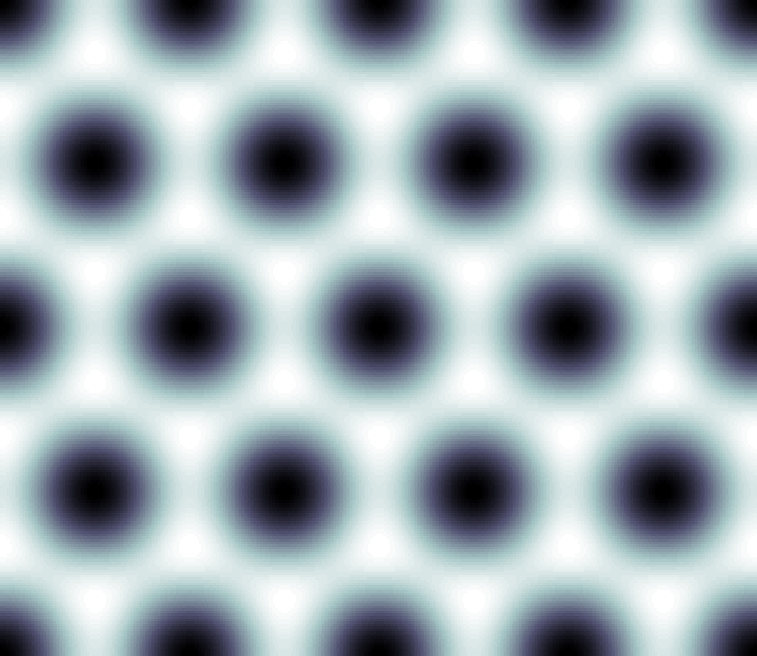}\end{minipage}}}&$(0.07, 0.025)$&\makecell{\vspace*{0.05cm}$0.19$\\$7.6\cdot10^{-4}$}&\makecell{\vspace*{0.05cm}$1.4\cdot10^{-13}$\\$3.2\cdot10^{-3}$}&\makecell{$-3.013\cdot10^{-6}$\\$8.1\cdot10^{-14}$}&3\\
\cline{2-6}
&$(0.3, 0.5)$&\makecell{\vspace*{0.05cm}$0.99$\\$8.8\cdot10^{-2}$}&\makecell{\vspace*{0.05cm}$3.7\cdot10^{-12}$\\$2.9\cdot10^{-3}$}&\makecell{$-1.839\cdot10^{-3}$\\$1.6\cdot10^{-11}$}&12\\
\cline{2-6}
&$(0.5, 1.0)$&\makecell{\vspace*{0.05cm}$1.09$\\$1.0\cdot10^{-1}$}&\makecell{\vspace*{0.05cm}$4.7\cdot10^{-12}$\\$2.1\cdot10^{-3}$}&\makecell{$-1.312\cdot10^{-3}$\\$2.1\cdot10^{-11}$}&16\\
\end{tabular}
\end{center}
\end{table}

%-------------------------------------------------------------------------------------------------------
\subsection{Steady states in the hexagonal lattice regime}
The Newton iteration can detect new steady states regardless of stability as it is based on criticality instead of minimality. This allows us to find steady states that are observed only \textit{momentarily} or even \textit{locally} during a PFC simulation. Table~\ref{tab:SteadyStatesLargeDomains} presents a few of the $28$ distinct steady states found for $(\pbar, \beta) = (0.07, 0.025)$, $(N_x, N_y) = (8, 5)$, $\nu = 1.05$ and $M = 40$. Starting at random initial coefficient matrices, the Newton iteration converges in 15 to 50 steps. The four main ansatz were also explicitly tested, as only the atoms state could be reached from random initial conditions.

\begin{table}[h!]
\begin{center}
\caption[Steady states for large domains]{\label{tab:SteadyStatesLargeDomains} Data on steady states for $(\pbar, \beta) = (0.07, 0.025)$ and $(N_x, N_y) = (8, 5)$, capturing roughly $80$ atoms. The observed count is the number of times the steady state, including its discrete translational shifts, were reached out of $200$ randomized trials.}
\renewcommand{\arraystretch}{0.8}
\begin{tabular}{cccccc}
\Xhline{3\arrayrulewidth}
Visualization	&	\makecell{$r_*$\\$r^*$}	&	\makecell{$E[\abar]-E_0$\\$|E[\abar]-E[\atil]|$}	&	Morse index		&	Count	\\
\hline
\begin{minipage}{0.12\textwidth}\vspace*{0.05cm}\includegraphics[width=\linewidth]{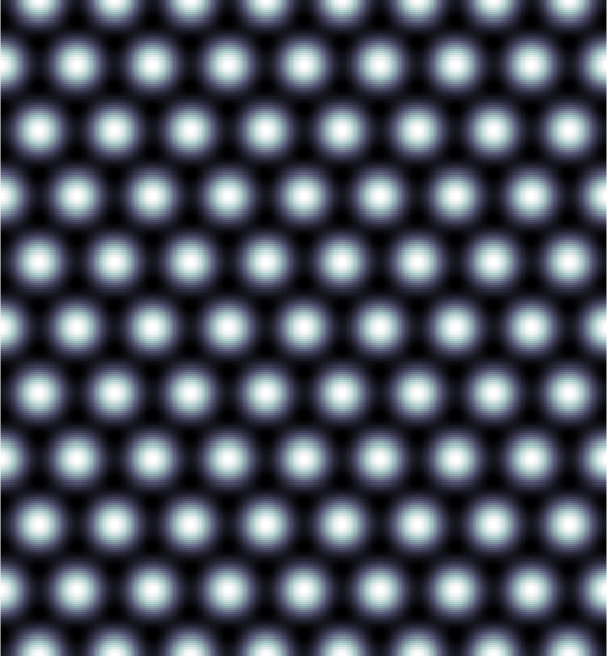}\vspace*{0.05cm}\end{minipage}&\makecell{$1.0\cdot10^{-8}$\\$2.8\cdot10^{-3}$}&\makecell{$-4.714\cdot10^{-5}$\\$2.2\cdot10^{-8}$}&$0$&$53$\\
\hline
\begin{minipage}{0.12\textwidth}\vspace*{0.05cm}\includegraphics[width=\linewidth]{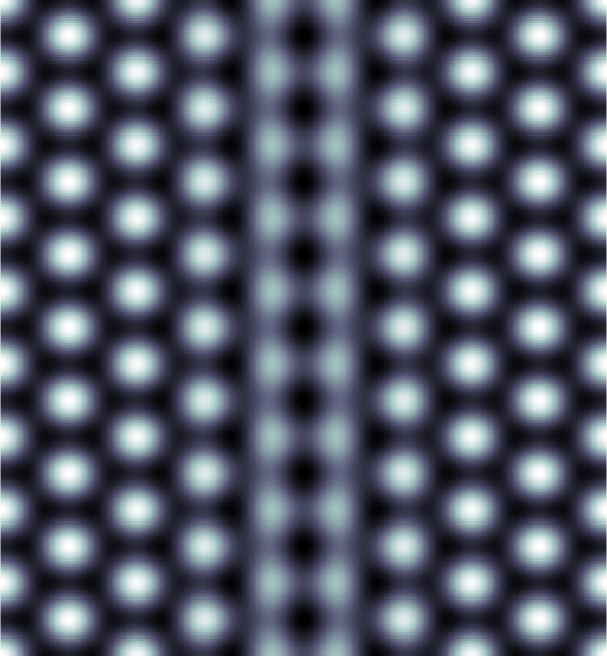}\vspace*{0.05cm}\end{minipage}&\makecell{$2.5\cdot10^{-8}$\\$2.9\cdot10^{-4}$}&\makecell{$-2.358\cdot10^{-5}$\\$6.7\cdot10^{-8}$}&$0$&$21$\\
\hline
\begin{minipage}{0.12\textwidth}\vspace*{0.05cm}\includegraphics[width=\linewidth]{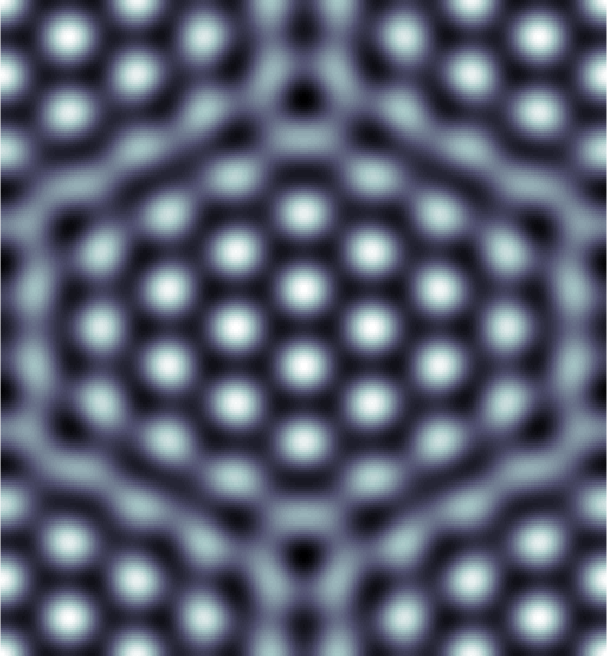}\vspace*{0.05cm}\end{minipage}&\makecell{$5.4\cdot10^{-8}$\\$5.8\cdot10^{-5}$}&\makecell{$-2.215\cdot10^{-5}$\\$1.9\cdot10^{-7}$}&$0$&$57$\\
\hline
\begin{minipage}{0.12\textwidth}\vspace*{0.05cm}\includegraphics[width=\linewidth]{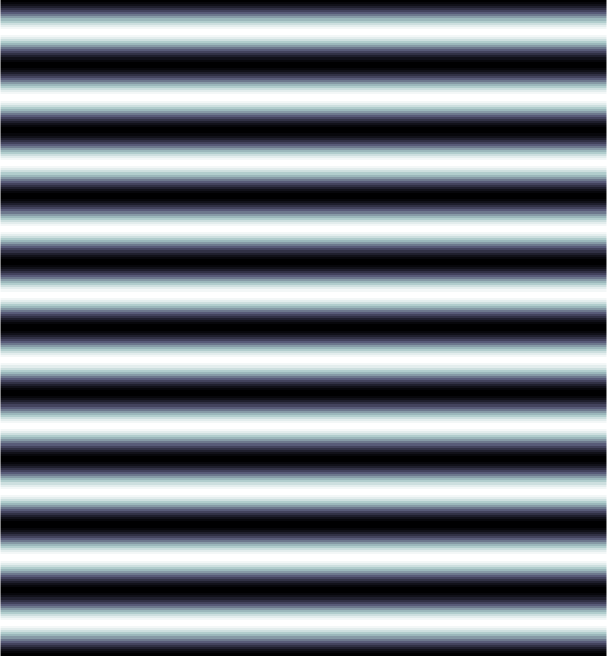}\vspace*{0.05cm}\end{minipage}&\makecell{$1.6\cdot10^{-9}$\\$2.0\cdot10^{-3}$}&\makecell{$-1.774\cdot10^{-5}$\\$1.3\cdot10^{-9}$}&$1$&$0$\\
\hline
\begin{minipage}{0.12\textwidth}\vspace*{0.05cm}\includegraphics[width=\linewidth]{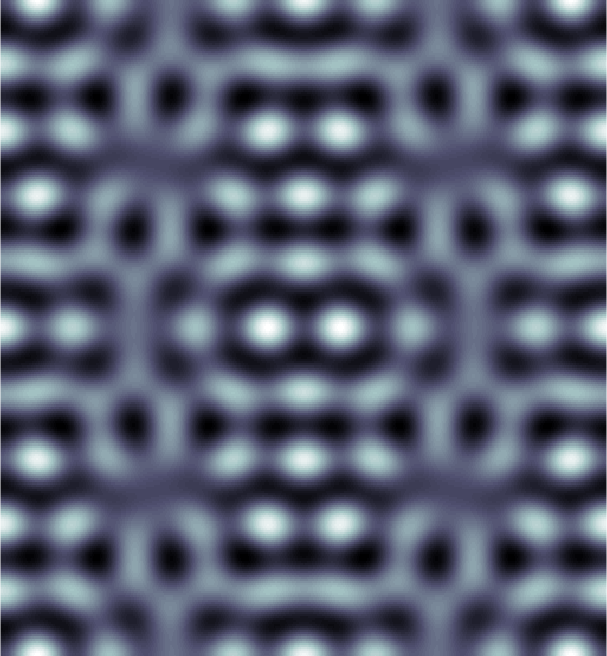}\vspace*{0.05cm}\end{minipage}&\makecell{$3.5\cdot10^{-8}$\\$5.4\cdot10^{-5}$}&\makecell{$-1.161\cdot10^{-5}$\\$9.0\cdot10^{-8}$}&$2$&$1$\\
\end{tabular}
\end{center}
\end{table}

Note that the energy of the exact steady states can be compared from Table~\ref{tab:SteadyStatesLargeDomains}: for instance, the energy of the \textit{exact} atoms state is bounded away from the others so it is guaranteed to be the best candidate global minimizer out of the observed steady states at the current parameter values.

The second and third states presented in the table clearly display two grains of the same orientation but with boundary atoms meeting ``head-to-head.'' This is essentially an intermediate in the grains slipping on one another that is stabilized by the restrictions of the boundary conditions. Such states then represent a grain boundary that is stable, at least in $X$. When PFC simulations~\cite{ELSEY_Scheme} are initialized at these states, the flow appears to be stable for thousands of steps then suddenly goes to the hexagonal lattice, meaning there are unstable directions in the rest of $\hp{2}$. Nevertheless, the fact remains that grain boundaries can be \textit{steady states}.

%-------------------------------------------------------------------------------------------------------
\subsection{Steady states in the localized patterns regime}
Table~\ref{tab:SteadyStatesLocalized} presents some steady states found for $(\pbar, \beta) = (0.5, 0.6)$, $(N_x, N_y) = (7, 4)$, $\nu = 1.01$ and $M = 60$. In this regime, localized or coexistence patterns are observed in PFC simulations, some of which we can confirm to be steady states: note in particular the existence of a ``single atom'' state. We see here that the global minimizer cannot be of the four main ansatz. We observe \textit{two} atoms states with different amplitudes and stability, highlighting the fact that the ``linear'' candidate is no longer appropriate as $\beta$ increases and nonlinear effects begin to dominate the energy.

Similar results have been obtained previously for a version of Swift-Hohenberg with broken $\psi \to -\psi$ symmetry, see~\cite{LLOYD_LocalizedHexagons, VANDENBERG_CoexistenceHexagonsRolls}.

\begin{table}[H]
\begin{center}
\caption[Steady states for localized patterns]{\label{tab:SteadyStatesLocalized} Data on steady states for $(\pbar, \beta) = (0.5, 0.6)$ and $(N_x, N_y) = (7, 4)$. No count is provided because only a few trials were attempted.}
\renewcommand{\arraystretch}{0.8}
\begin{tabular}{cccc}
\Xhline{3\arrayrulewidth}
Visualization	&	\makecell{$r_*$\\$r^*$}	&	\makecell{$E[\abar]-E_0$\\$|E[\abar]-E[\atil]|$}	&	Morse index	\\
\hline
\begin{minipage}{0.12\textwidth}\vspace*{0.05cm}\includegraphics[width=\linewidth]{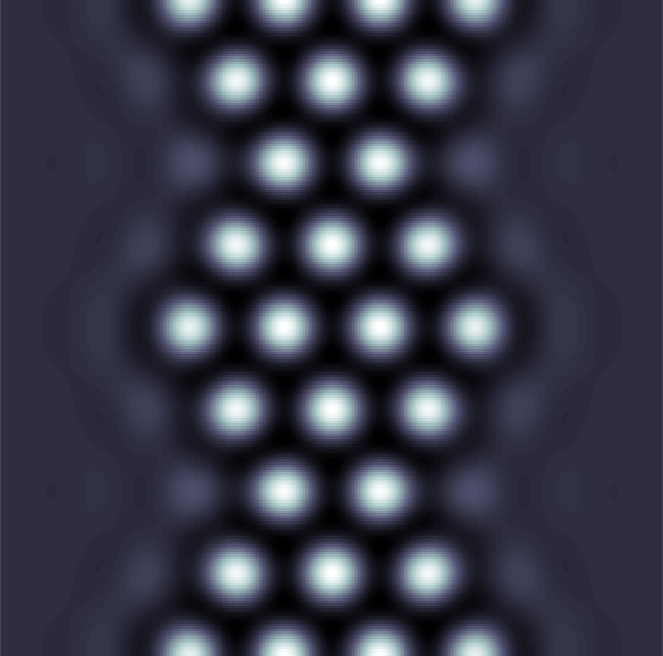}\vspace*{0.05cm}\end{minipage}&\makecell{$9.9\cdot10^{-11}$\\$3.3\cdot10^{-4}$}&\makecell{$-2.465\cdot10^{-3}$\\$4.7\cdot10^{-9}$}&$0$\\
\hline
\begin{minipage}{0.12\textwidth}\vspace*{0.05cm}\includegraphics[width=\linewidth]{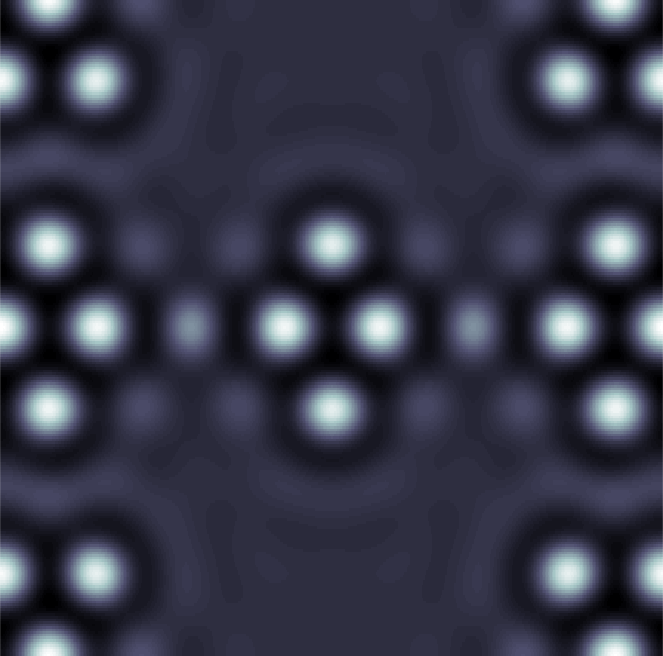}\vspace*{0.05cm}\end{minipage}&\makecell{$1.6\cdot10^{-10}$\\$2.4\cdot10^{-4}$}&\makecell{$-1.457\cdot10^{-3}$\\$1.9\cdot10^{-8}$}&$1$\\
\hline
\begin{minipage}{0.12\textwidth}\vspace*{0.05cm}\includegraphics[width=\linewidth]{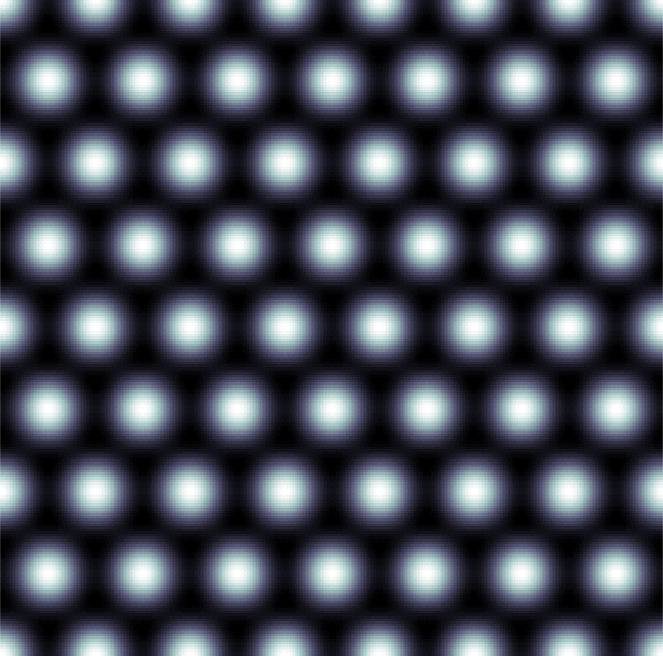}\vspace*{0.05cm}\end{minipage}&\makecell{$2.6\cdot10^{-10}$\\$7.7\cdot10^{-5}$}&\makecell{$-2.670\cdot10^{-4}$\\$2.3\cdot10^{-9}$}&$8$\\
\hline
\begin{minipage}{0.12\textwidth}\vspace*{0.05cm}\includegraphics[width=\linewidth]{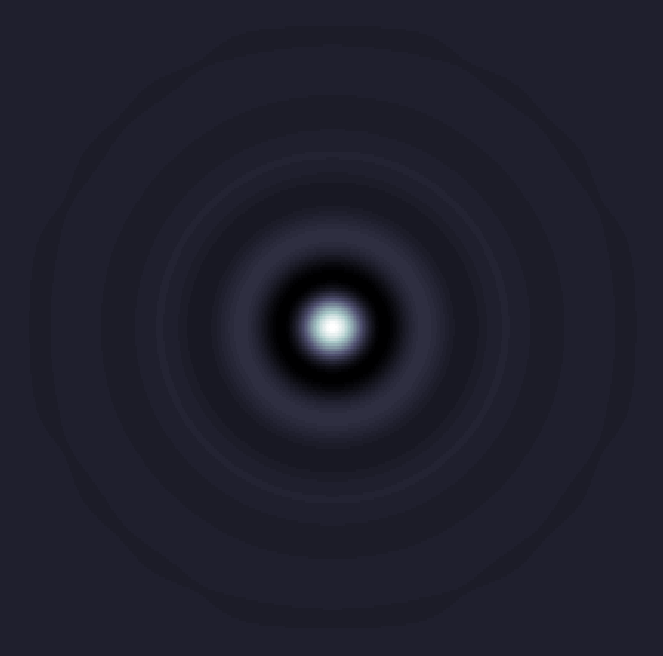}\vspace*{0.05cm}\end{minipage}&\makecell{$2.1\cdot10^{-11}$\\$1.3\cdot10^{-3}$}&\makecell{$-6.420\cdot10^{-5}$\\$3.0\cdot10^{-10}$}&$0$\\
\hline
\begin{minipage}{0.12\textwidth}\vspace*{0.05cm}\includegraphics[width=\linewidth]{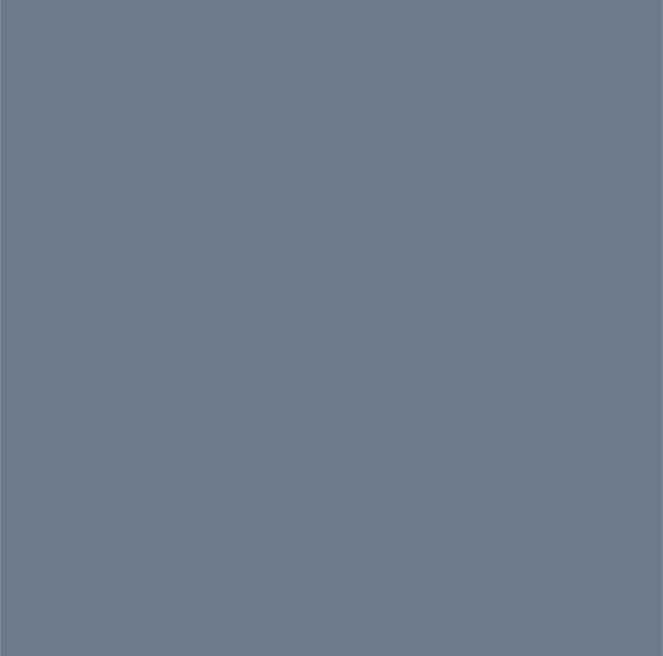}\vspace*{0.05cm}\end{minipage}&\makecell{$9.3\cdot10^{-15}$\\$4.8\cdot10^{-2}$}&\makecell{$<\epsilon$\\$2.8\cdot10^{-14}$}&$0$\\
\hline
\begin{minipage}{0.12\textwidth}\vspace*{0.05cm}\includegraphics[width=\linewidth]{Graphics/SS_Localized_7.png}\vspace*{0.05cm}\end{minipage}&\makecell{$1.7\cdot10^{-11}$\\$1.6\cdot10^{-3}$}&\makecell{$5.629\cdot10^{-4}$\\$5.7\cdot10^{-11}$}&$20$\\
\end{tabular}
\end{center}
\end{table}

%-------------------------------------------------------------------------------------------------------
\subsection{Steady states for the large $\beta$ regime}
Table~\ref{tab:SteadyStatesLargeBeta} shows a selection of steady states found in the large $\beta$ regime, $(\pbar, \beta) = (2.5, 20.0)$, $(N_x, N_y) = (4, 2)$, $\nu = 1.01$ and $M = 65$. In this regime, the microscopic organization is lost as constant patches of phase form, with value close to $\pm \sqrt{\beta}$, meaning that the double well term of the PFC functional dominates the oscillation term.

\begin{table}[H]
\begin{center}
\caption[Steady states for large $\beta$]{\label{tab:SteadyStatesLargeBeta} Data on steady states for $(\pbar, \beta) = (2.5, 20.0)$ and $(N_x, N_y) = (4, 2)$.}
\renewcommand{\arraystretch}{0.8}
\begin{tabular}{cccc}
\Xhline{3\arrayrulewidth}
Visualization	&	\makecell{$r_*$\\$r^*$}	&	\makecell{$E[\abar]-E_0$\\$|E[\abar]-E[\atil]|$}	&	Morse index	\\
\hline
\begin{minipage}{0.12\textwidth}\vspace*{0.05cm}\includegraphics[width=\linewidth]{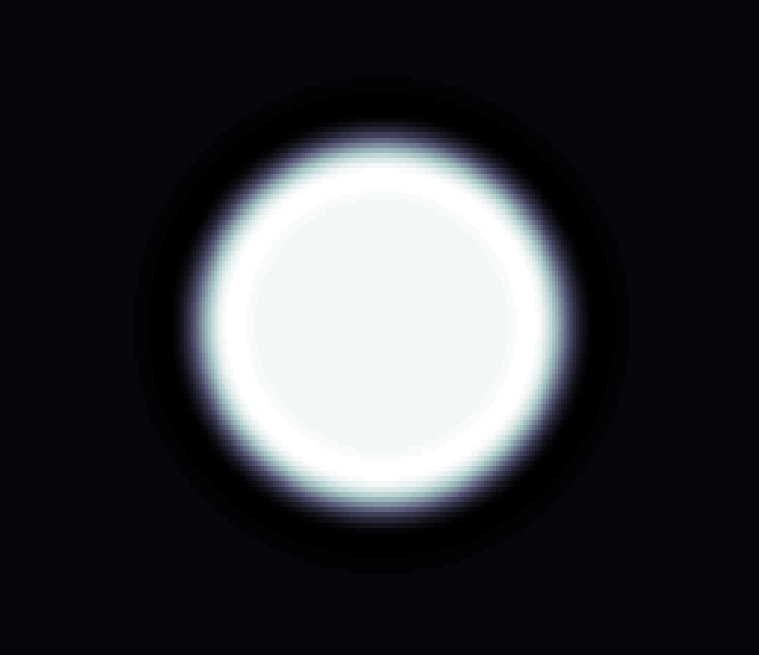}\vspace*{0.05cm}\end{minipage}&\makecell{$1.5\cdot10^{-8}$\\$5.0\cdot10^{-5}$}&\makecell{$-36.71$\\$5.8\cdot10^{-4}$}&$0$\\
\hline
\begin{minipage}{0.12\textwidth}\vspace*{0.05cm}\includegraphics[width=\linewidth]{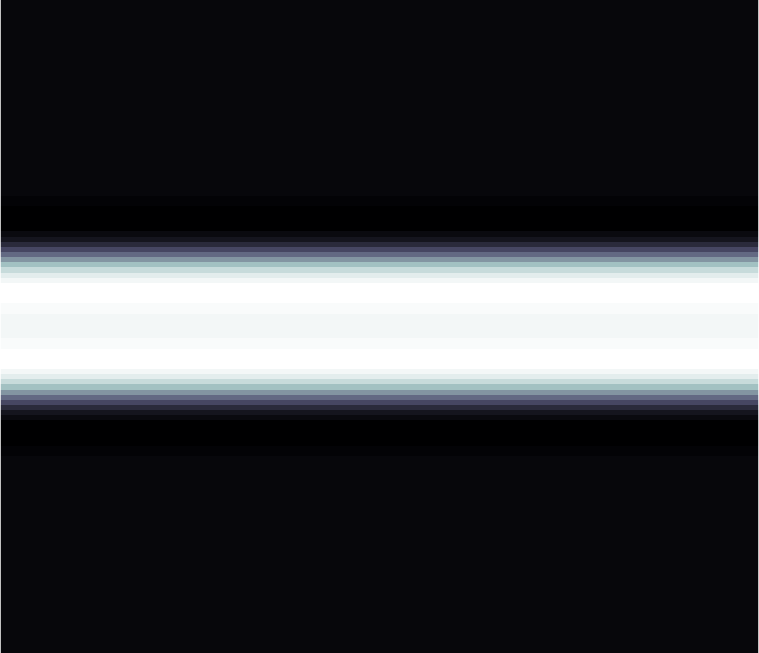}\vspace*{0.05cm}\end{minipage}&\makecell{$1.2\cdot10^{-9}$\\$2.6\cdot10^{-4}$}&\makecell{$-35.48$\\$3.5\cdot10^{-6}$}&$0$\\
\hline
\begin{minipage}{0.12\textwidth}\vspace*{0.05cm}\includegraphics[width=\linewidth]{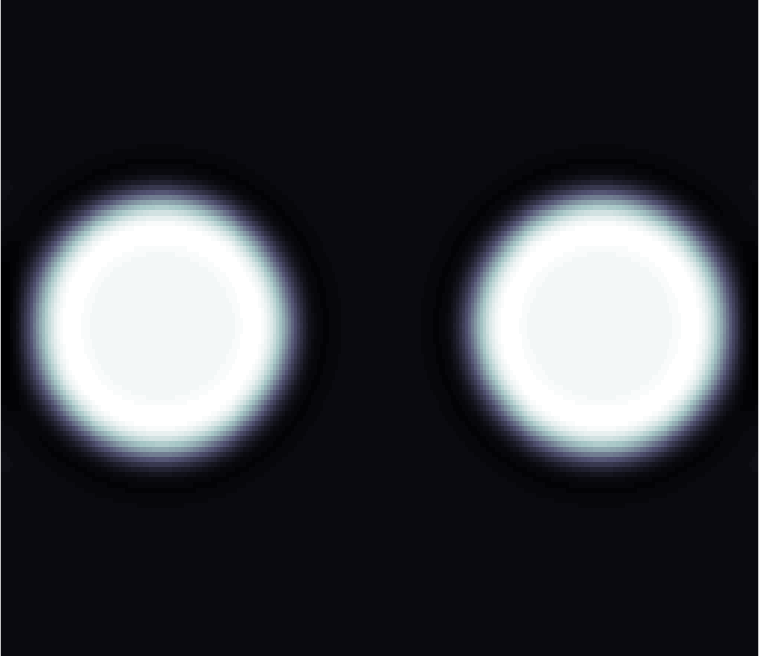}\vspace*{0.05cm}\end{minipage}&\makecell{$2.3\cdot10^{-8}$\\$2.9\cdot10^{-5}$}&\makecell{$-35.08$\\$1.0\cdot10^{-3}$}&$0$\\
\hline
\begin{minipage}{0.12\textwidth}\vspace*{0.05cm}\includegraphics[width=\linewidth]{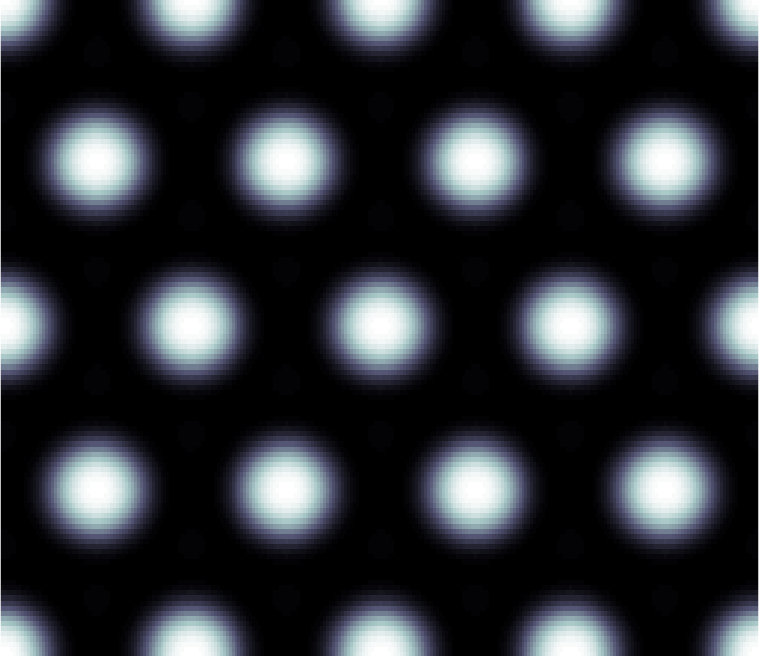}\vspace*{0.05cm}\end{minipage}&\makecell{$1.2\cdot10^{-7}$\\$4.4\cdot10^{-6}$}&\makecell{$-24.11$\\$4.6\cdot10^{-4}$}&$13$\\
\end{tabular}
\end{center}
\end{table}

%-------------------------------------------------------------------------------------------------------
\subsection{Phase diagram with verified steady states}
\label{sec:ImprovedPhaseDiagram}

The framework allows us to construct a ``rigorous'' phase diagram for PFC. Here the adjective ``rigorous" does  {\bf not} mean that we have identified the  ground state; but rather that the respective candidate state has been rigorously verified in its parameter regime. 
To this end, one must construct a ``patchwork'' of $(\pbar, \beta)$ split in regions in which we have a proof that a given state is a global minimizer. For now, we restrict ourselves to proving that one of the steady states near the known candidate minimizers has lower energy than all other \textit{known} steady states at given \textit{points}. Further, our attempt is somewhat limited by the small domains we can access. Nevertheless, this construction is useful and does indicate rigorously where the candidates \textit{cannot} be global minimizers.

Our approach is as follows: we discretize the $(\pbar, \beta)$ parameter space to some desired accuracy and for each point, we test the four ansatz and several other candidates obtained from random initial coefficients. When one of the four ansatz has verified lower energy than the others, up to translational symmetries, we label that point accordingly and otherwise leave the point blank. Fig.~\ref{fig:RN_PhaseDiagram} (a) shows the resulting diagram for small parameter values with $(N_x, N_y) = (4, 2)$, $\nu = 1.01$, $M = 20$. At each point, $30$ trials of the Newton iteration were tried and verified rigorously. Note that the points below $\beta = \pbar^2$ could have been skipped since the constant state is known to be the global minimizer in that regime~\cite{SHIROKOFF_GlobalMinimality}. This diagram matches the one obtained in the appendices with linear stability analysis.

Fig.~\ref{fig:RN_PhaseDiagram} (b) shows the phase diagram near $(\pbar, \beta) = (0.5, 0.6)$ where localized patterns have been observed. The domain is the same size but $M = 30$ to accommodate the larger $\beta$. At each point, $15$ trials were tried and verified, leading to points that have lower energy than the atoms or constant states. This indeed shows the existence of a region where localized patterns are more energetically favourable. This region gives an estimate of the full coexistence region that ultimately cannot be made explicit without more refined techniques.
\begin{figure}
	\centering
	\subfloat[\hspace*{-1.0cm}]{\includegraphics[width=0.48\textwidth]{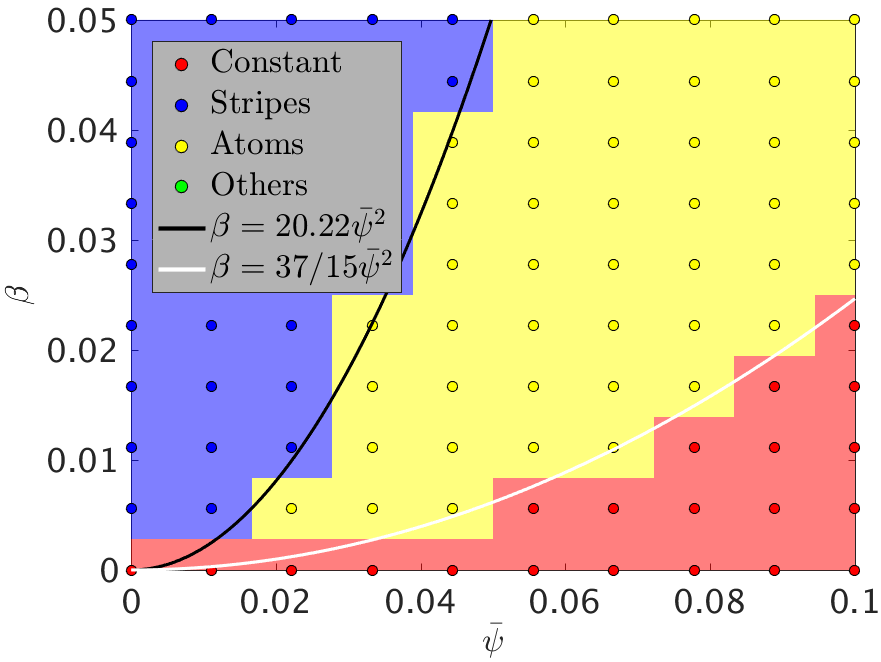}}\enskip
	\subfloat[\hspace*{-1.05cm}]{\includegraphics[width=0.48\textwidth]{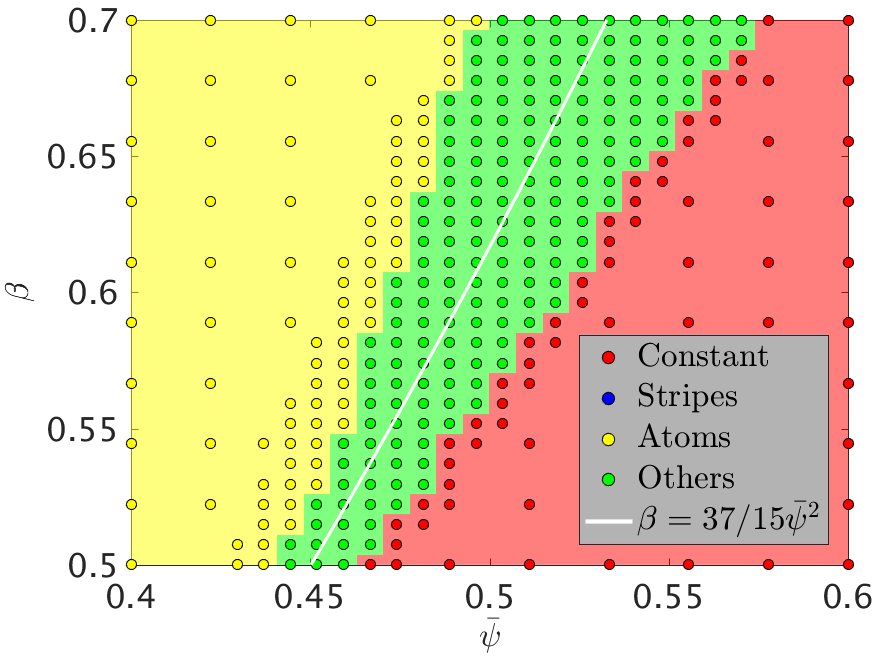}}
	
	\caption[Rigorous computation of the phase diagram]{Phase diagram for small parameter values (a) and for the localized patterns regime. (b) All points are prepared by rigorously verifying that the exact steady state around the ansatz have lower energy than all other observed steady states, up to translational shifts. Colored regions are filled in to guide the eye. The curves show the condition for the energy of the basic ansatz to be equal.}
	\label{fig:RN_PhaseDiagram}
\end{figure}

%-------------------------------------------------------------------------------------------------------
\subsection{Rigorous results for two-mode PFC}
\label{sec:RigorousTwoMode}
As a final example, Table~\ref{tab:SteadyStatesTwoMode} shows three verified steady states for two-mode PFC with $q = 1/\sqrt{2}$, $(\pbar, \beta) = (0.09, 0.025)$, $(N_x, N_y) = (12, 4)$, $\nu = 1.01$ and $M = 64$. Note that here, $L_x = 2\sqrt{2}\pi N_x \midand L_y = 2\sqrt{2}\pi L_y$ to fit the symmetry of the square lattice. The second state shows two grains slipping on each other; in contrast, especially to the result for hexagonal lattices, the third state is a grain boundary with \textit{non-zero} misorientation. Here, the rectangular domains with Neumann boundary conditions can support the geometry of the square lattice at $0^\circ$ \textit{and} $45^\circ$ rotations, so we can observe their coexistence. Since this result can be extended to larger domains by simple tiling operations, we conclude that straight grain boundaries \textit{can} be steady states even in infinite domains where boundary conditions cannot ``help'' stabilizing such defects.

Moreover, this grain boundary was observed to be (numerically) stable in two-mode PFC simulations in the sense that small random perturbations of the phase field always converged back to the grain boundary state. This is not a rigorous proof of stability in $\hp{2}$, but it gives a good indication that grain boundaries are likely to be stable features in the PFC model.

\begin{table}
\begin{center}
\caption[Steady states for two-mode PFC]{\label{tab:SteadyStatesTwoMode} Data on steady states for $(\pbar, \beta) = (0.09, 0.025)$ and $(N_x, N_y) = (12, 4)$ in the two-mode PFC model with $q = 1/\sqrt{2}$. $E_0$ is the energy of the constant state for two-mode PFC. $E[\abar]$ is listed for comparison purposes but it is not rigorously bounded.}
\renewcommand{\arraystretch}{0.8}
\begin{tabular}{cccc}
\Xhline{3\arrayrulewidth}
Visualization	&	\makecell{$r_*$\\$r^*$}	&	$E[\abar]-E_0$	&	Morse index	\\
\hline
\begin{minipage}{0.36\textwidth}\vspace*{0.05cm}\includegraphics[width=\linewidth]{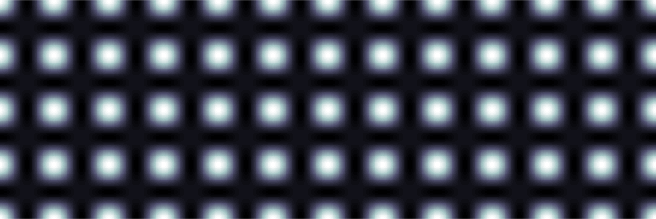}\vspace*{0.05cm}\end{minipage}&\makecell{$2.0\cdot10^{-12}$\\$3.6\cdot10^{-4}$}&$-2.758\cdot10^{-5}$&$0$\\
\hline
\begin{minipage}{0.36\textwidth}\vspace*{0.05cm}\includegraphics[width=\linewidth]{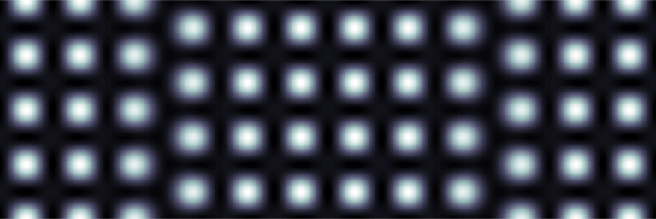}\vspace*{0.05cm}\end{minipage}&\makecell{$6.5\cdot10^{-12}$\\$1.3\cdot10^{-4}$}&$-2.319\cdot10^{-5}$&$0$\\
\hline
\begin{minipage}{0.36\textwidth}\vspace*{0.05cm}\includegraphics[width=\linewidth]{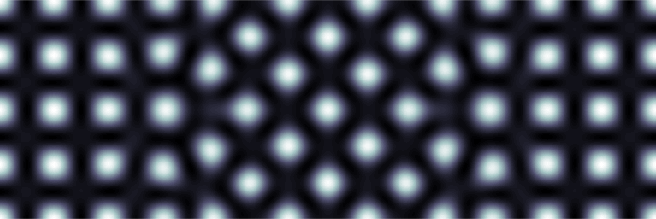}\vspace*{0.05cm}\end{minipage}&\makecell{$2.4\cdot10^{-11}$\\$4.2\cdot10^{-5}$}&$-2.244\cdot10^{-5}$&$0$\\
\end{tabular}
\end{center}
\end{table}

%%%%%%%%%%%%%%%%%%%%%%%%%%%%%%%%%%%%%%%%%%%%%%%%%%%%%%%%%%%%%%%%%%%%%%%%%%%%%%%%%%%%%%%%%%%%%%%%%%%%%%%%%%
%%%%%%%%%%%%%%%%%%%%%%%%%%%%%%%%%%%%%%%%%%%%%%%%%%%%%%%%%%%%%%%%%%%%%%%%%%%%%%%%%%%%%%%%%%%%%%%%%%%%%%%%%%
\section{Connections between steady states}
Suppose $\Psi_1, \Psi_2$ represent two steady states, we say that there is a connection (or a connecting orbit) from $\Psi_1$ to $\Psi_2$ if there exists a solution $\psi(t)$ with the property that
\begin{equation*}
\lim_{t \to -\infinity} \psi(t) = \Psi_1 \midand \lim_{t \to +\infinity} \psi(t) = \Psi_2 \eqdot
\end{equation*}

More precisely, the connecting orbit leaves the unstable manifold of $\Psi_1$ and ends up in the stable manifold of $\Psi_2$. Since the PFC equation is a gradient flow, there cannot exist non-trivial homoclinic connections so there is a natural \textit{hierarchy} of steady states expressed through heteroclinic connections. This concept is extremely useful to ``visualize'' the energy landscape.

States with Morse index $0$ are stable (for fixed parameters) and are thus at the bottom of the hierarchy. Those states with Morse index $1$ have one unstable direction, so there are two distinct perturbations that lead away from the state. For states with Morse index $2$, two unstable directions span infinitely many such perturbations, and so on. To detect connections, we propose to initialize a PFC flow near an unstable steady state offset by such perturbations. If the flow becomes close enough to another known steady states, we stop and propose a \textit{conjectured} connection between the two steady states. This procedure often allows us to find unknown steady states: when the flow stagnates, the Newton iteration can be run and often converges in very few steps to a steady state that can be verified. Alternatively, we could check for inclusion in the target $r^*$ ball, but this is a very restrictive criterion that limits our numerical investigation, especially when obtaining connections to unstable states. We use the PFC scheme detailed in~\cite{ELSEY_Scheme}.

While we cannot for the moment prove such claims because ``parameterizing'' the infinite dimensional stable manifold of the unstable steady states is highly non-trivial, we are aware of some preliminary work in this direction \cite{BERG_JAQUETTE_MIRELES}.
That said, computer-assisted proofs of connecting orbits from saddle points to asymptotically stable steady states in parabolic PDEs are starting to appear \cite{MR3773757,MR3906120,JAQUETTE_LESSARD_TAKAYASU}.
%, but see~\cite{CASTELLI_Parametrization1, CASTELLI_Parametrization2} for work related to finite dimensional problems.

We first consider the standard parameters $(\pbar, \beta) = (0.07, 0.025)$ and use the very small domain $(N_x, N_y) = (2, 1)$. This choice is made to ensure that the constant state has Morse index $2$ in $X$ to simplify the visualization. We find seven steady states: both possible translations of the atoms, stripes and donuts state, and the trivial constant state. Following the program described above, we can construct the ``connection diagram'' shown in Fig.~\ref{fig:ConnectionDiagramEnergy} (a) with the arrows indicating that a connection was found from a state to the other. Note in particular that the stable stripes state to the right \textit{numerically} decays into the appropriately shifted hexagonal lattices, but this is a slow process as the sine modes must grow out of numerical noise. This clearly shows that our method cannot be used to guarantee stability in $\hp{2}$ because it \textit{cannot} depend on translational shifts.
\begin{figure}
	\centering
	\subfloat[]{\includegraphics[width=0.65\textwidth]{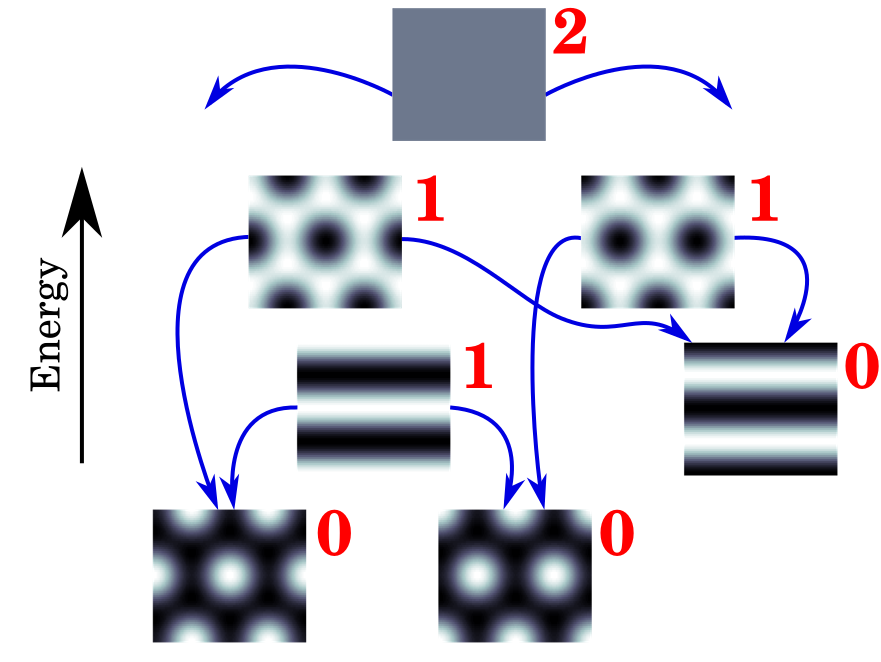}}\enskip
	\subfloat[]{\includegraphics[width=0.65\textwidth]{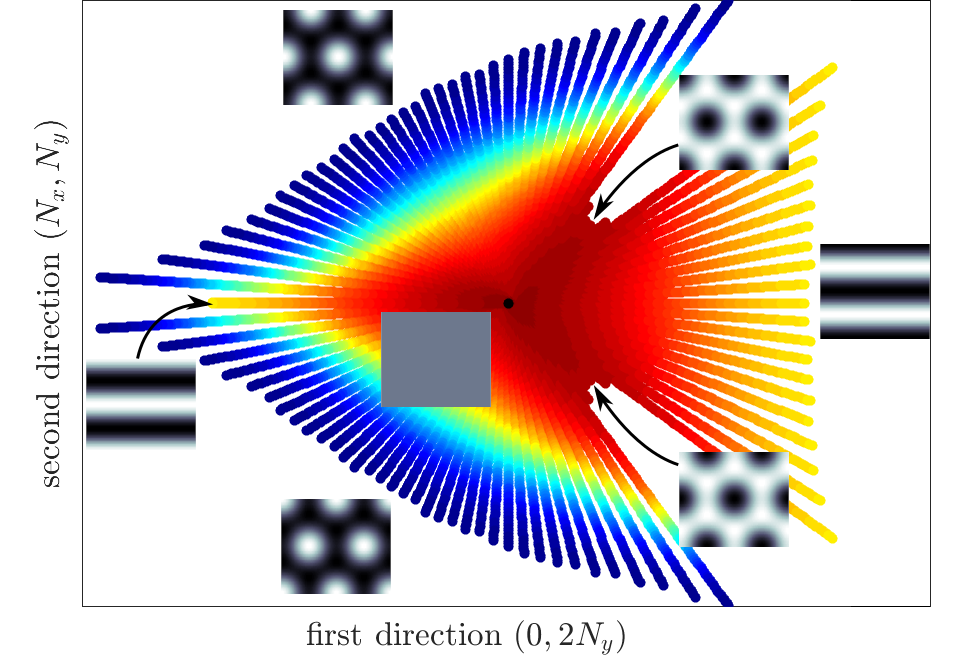}}
		
	\caption[Connection energy diagram]{Connection diagram (a) where arrows represent likely connections; the constant state is connected to all others. The vertical axis gives the ordering in energy while the numbers give the Morse index. (b) Energy visualization with respect to the unstable directions of the constant state. This diagram illustrates how the unstable directions combine to transform the constant state into other lower energy states. The unstable directions serve as the main axes and the lines represent different initial perturbations. The length of the lines indicate the number of PFC steps before the flows becomes close to the connecting steady states. Colors represent energy (red for high and blue for low energy).}
	\label{fig:ConnectionDiagramEnergy}
\end{figure}

We also propose a visualization method for such diagrams shown in Fig.~\ref{fig:ConnectionDiagramEnergy} (b). Take for example the constant state with its two unstable directions given by the coefficients $a_{0, 2N_y}$ and $a_{N_x, N_y}$. We place the constant state at the origin and plot radial lines along linear combination of the unstable directions. The line length corresponds to the number of PFC steps needed to approach the target steady states. In addition, we can color the points along the line as a function of energy to indicate energetic relationships. A variant would be to show the energy as the $z$-component of a surface; essentially giving an indirect visualization of the energy landscape through 2D unstable manifolds. In particular, this diagram clarifies the relationships between the steady states. For instance, the stripes states are formed by adding the $a_{0, 2N_y}$ mode to the constant state while the donuts are combinations of the atoms and stripes states.

We now consider the localized patterns regime to illustrate these ideas with states of high Morse index. We do not attempt to build a higher dimensional visualization, but simply attempt to recover the ``pathways'' between the highly unstable hexagonal lattice with Morse index $20$ towards stable steady states. This is visualized in the connection diagram of Fig.~\ref{fig:ConnectionDiagramEnergyLocalized} (a) which includes a few states of Table~\ref{tab:SteadyStatesLocalized}. In (b), we plot the energy along the PFC flow starting from the index 2 state; this plot can be thought of as one of the rays in a diagram like Fig.~\ref{fig:ConnectionDiagramEnergy} (b). Note that along the flow, the energy decreases in ``steps'' corresponding to changes in topology, i.e. the formation (or removal) of atoms. In this process, we could not verify that these intermediates are steady states since the Newton iteration always converged to the endpoint; we then suppose they are short-lived ``metastable'' states.
\begin{figure}
	\centering
	\subfloat[]{\includegraphics[width=0.45\textwidth]{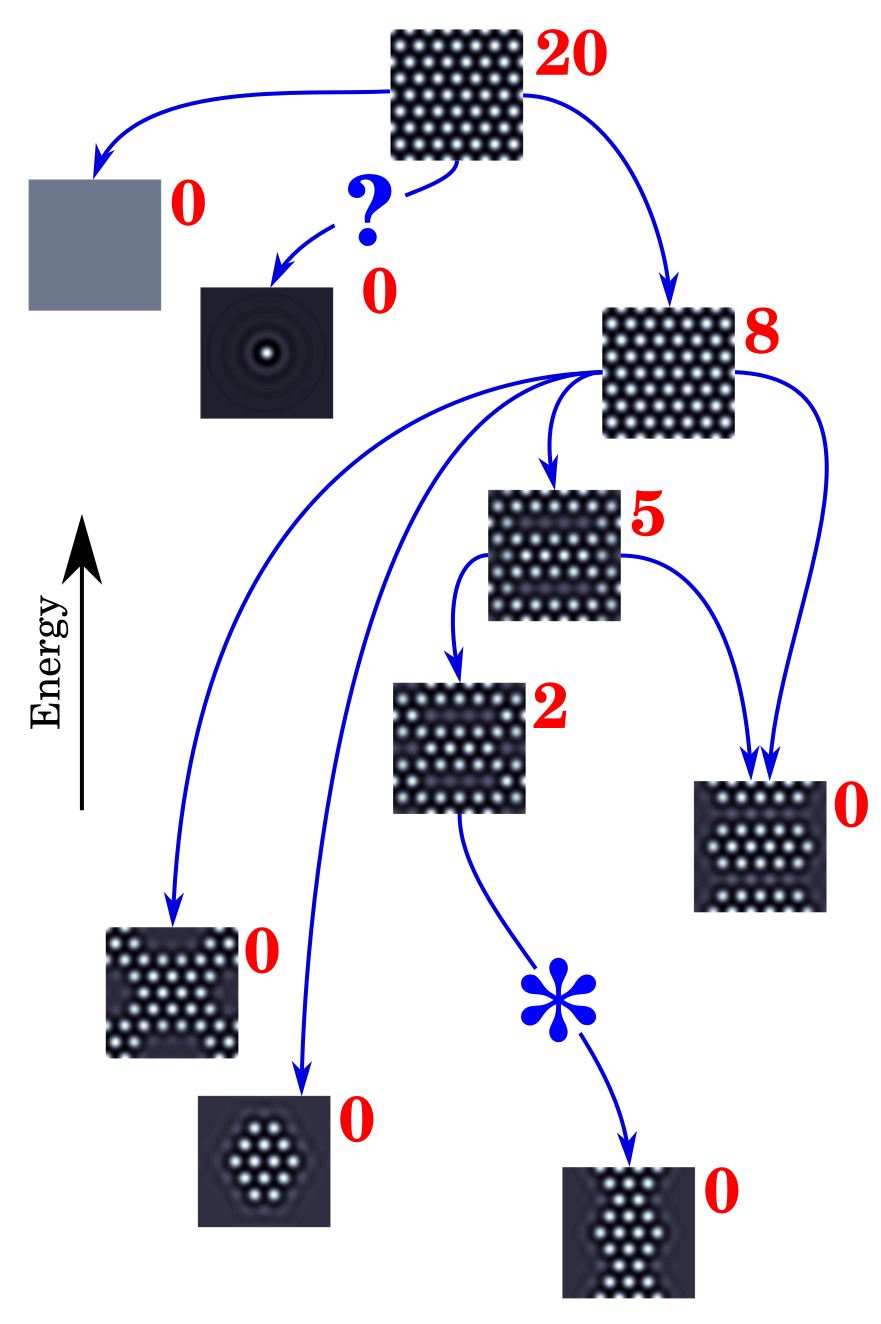}}\enskip
	\subfloat[]{\includegraphics[width=0.45\textwidth]{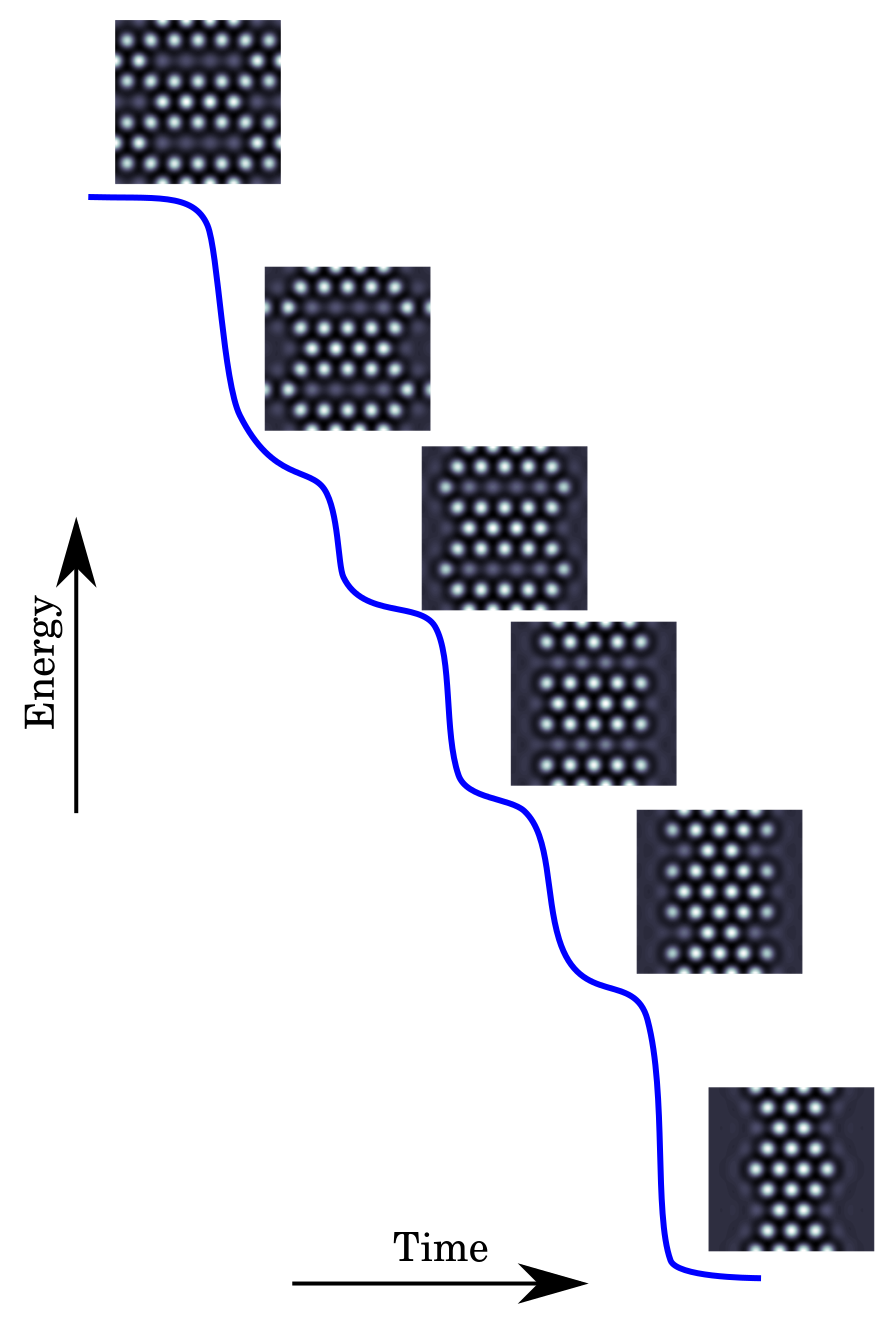}}
		
	\caption[Connection energy diagram]{Connection diagram (a) where arrows represent a few of the connections found. The two hexagonal lattice states differ in their amplitude and stability. The vertical axis roughly indicates the energy while the numbers give the Morse indices. We could not obtain (nor disprove) a connection to the single atom state, indicated with the question mark. The connection labeled with a star is broken down in the energy plot to the right (b). These states appear to be metastable intermediates where the energy gradient becomes small and the evolution slows down considerably. The blue curve shows the energy as a function of time in arbitrary units, highlighting momentaneous ``flats'' in the evolution.}
	\label{fig:ConnectionDiagramEnergyLocalized}
\end{figure}

It is difficult to obtain perturbations that can flow to \textit{desired} steady states, especially when they are unstable; see how only a few directions reach the Morse index $1$ states in Fig.~\ref{fig:ConnectionDiagramEnergy} (b). Indeed, unless ``trivial'' combinations of the unstable direction happen to go to an unstable state, we are unlikely to find such connections numerically. Similarly, our attempts to find a perturbation that connects the starting lattice to the single atom state were unfruitful.

%%%%%%%%%%%%%%%%%%%%%%%%%%%%%%%%%%%%%%%%%%%%%%%%%%%%%%%%%%%%%%%%%%%%%%%%%%%%%%%%%%%%%%%%%%%%%%%%%%%%%%%%%%
%%%%%%%%%%%%%%%%%%%%%%%%%%%%%%%%%%%%%%%%%%%%%%%%%%%%%%%%%%%%%%%%%%%%%%%%%%%%%%%%%%%%%%%%%%%%%%%%%%%%%%%%%%
\section{Parameter continuation for steady states}
A verified steady state $\atil$ for some parameter $(\pbar, \beta)$ is usually part of a family of steady states representing a ``phase'' of matter. In fact, the candidate minimizers defined in appendix~\ref{sm:PFCAnsatz} as functions of $(\pbar, \beta)$ approximate such families, or branches in the \textit{bifurcation diagram}. In this context, we can construct such branches by starting at a known steady state, vary $\pbar$ and find the closest steady state at this new parameter value. 

Several verified techniques exist for following branches, see~\cite{VANDENBERG_BranchFollowing} and~\cite{VANDENBERG_OhtaKawasaki} for an application to Ohta-Kawasaki. We use non-verified pseudo-arclength continuation~\cite{MR910499} in $\pbar$. Note that the unstable direction that is to followed is precisely given by the one corresponding to the ``fixed'' $a_{0,0} = \pbar$ condition and this is one of the reasons that we chose to enforce this directly in the formulation of $F$. As a possible extension, 2D manifolds can be constructed in 2-parameter continuation when both parameters are allowed to vary, see~\cite{GAMEIRO_RigorousMultiparam}.

Fig.~\ref{fig:ContinuationDiagram} shows the norm (a) and offset energy (b) of the main ansatz at $(\pbar, \beta) = (0.07, 0.025)$ are plotted as functions of $\pbar$. The domain is kept small with $(N_x, N_y) = (2, 1)$ to keep the bifurcation diagram as simple as possible. The atoms and donuts branches are actually the same since we can continue the branches through the folds at $\pbar = \pm \sqrt{5/12\beta}$. This branch intersects the checkers state at $\beta = 15\pbar^2$ and the constant and stripes states at $\beta = 3\pbar^2$. The energy plot (b) clearly shows that the donuts state is the ``proper'' hexagonal lattice for $\pbar < 0$. We note that varying $\beta$ simply causes the branches to dilate. For example, we expect the 2D hexagonal steady states manifold to be a ``conic'' figure-eight.

\begin{figure}[H]
	\centering
	\subfloat[\hspace*{-1.1cm}]{\includegraphics[width=0.45\textwidth]{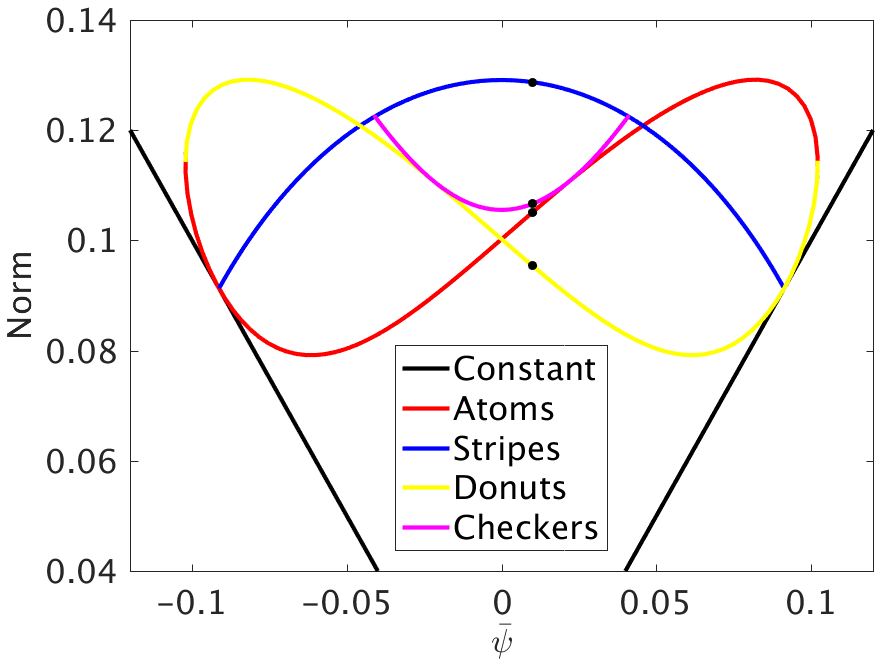}}\enskip
	\subfloat[\hspace*{-0.95cm}]{\includegraphics[width=0.45\textwidth]{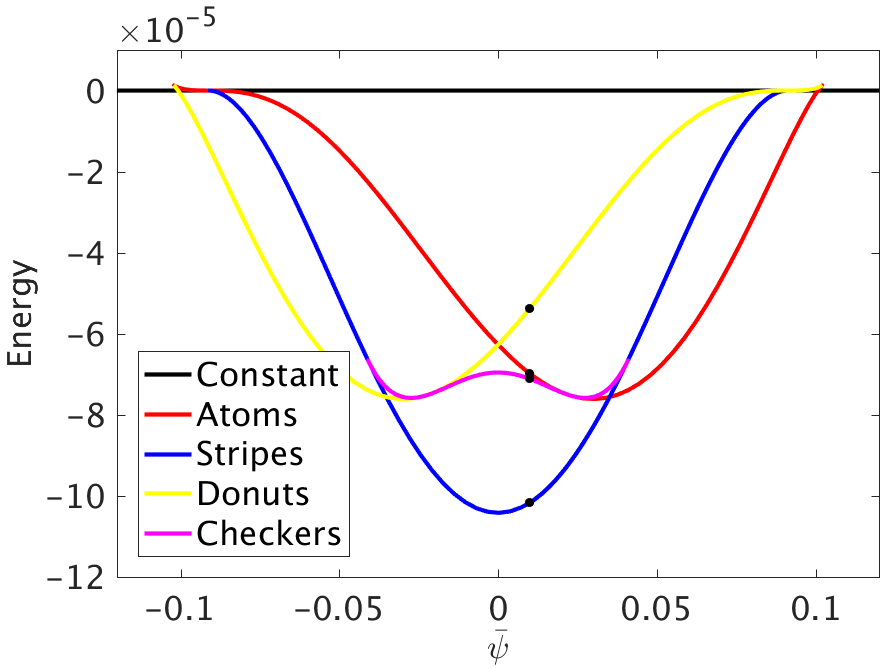}}
	
	\caption[Continuation diagram]{Continuation (bifurcation) diagram showing the $\lp{2}$ norm of the phase (a) and the energy offset by $E_0$ (b) as functions of $\pbar$. The dots represent the starting points at $(\pbar, \beta) = (0.01, 0.025)$.}
	\label{fig:ContinuationDiagram}
\end{figure}

Other ``new'' branches will appear for larger domains or higher $\beta$. In particular, Fig.~\ref{fig:ContinuationDiagramLocalized} shows the atoms/donuts branch and the single atom branch in the localized patterns regime near $(\pbar, \beta) = (0.5, 0.6)$ with $(N_x, N_y) = (7, 4)$. Again, (a) shows the $\lp{2}$ norm and (b) shows the energy of the phase field as functions of $\pbar$. The hexagonal lattice traces out its usual figure-eight pattern while the single atom (and other localized states in general) traces out a complicated looping path. Such branches illustrate the ``snaking'' phenomenon previously observed in modified Swift-Hohenberg equations that support such localized patterns, see~\cite{LLOYD_LocalizedHexagons} for example. We observe that the path loops on itself in one direction as the single atom evolves into a localized pattern with $9$, $7$ then $4$ atoms before looping back with a $90^\circ$ rotation. In the other direction, the branch moves towards the transition between the hexagonal and constant states where it again loops back. This computation is difficult because the truncation must remain large and the pseudo-arclength step size must remain small; if the step size is larger than $0.0005$, the branch breaks away towards the hexagonal lattice solution.
\begin{figure}[H]
	\centering
	\subfloat[\hspace*{-1.1cm}]{\includegraphics[width=0.45\textwidth]{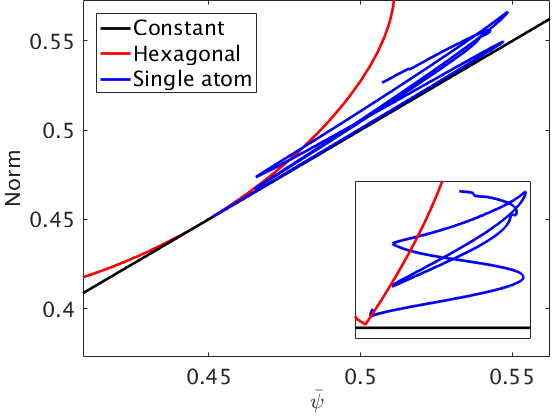}}\enskip
	\subfloat[\hspace*{-0.95cm}]{\includegraphics[width=0.45\textwidth]{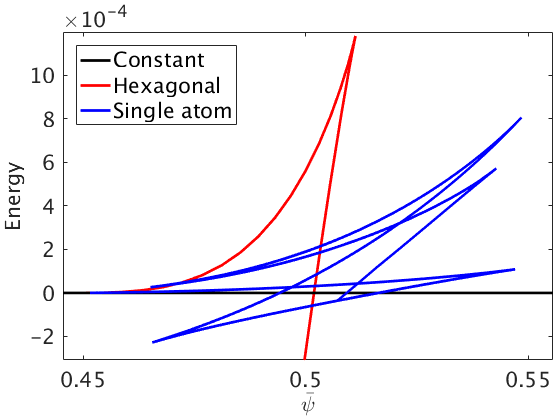}}
	
	\caption[Continuation diagram]{Continuation (bifurcation) diagram showing the $\lp{2}$ norm of the phase (a) and the energy offset by $E_0$ (b) as functions of $\pbar$. The inset in (a) shows the norm of $\psi - \pbar$ to better illustrate the snaking phenomenon. Both the hexagonal and single atom branches appear to loop on themselves.}
	\label{fig:ContinuationDiagramLocalized}
\end{figure}

%%%%%%%%%%%%%%%%%%%%%%%%%%%%%%%%%%%%%%%%%%%%%%%%%%%%%%%%%%%%%%%%%%%%%%%%%%%%%%%%%%%%%%%%%%%%%%%%%%%%%%%%%%
%%%%%%%%%%%%%%%%%%%%%%%%%%%%%%%%%%%%%%%%%%%%%%%%%%%%%%%%%%%%%%%%%%%%%%%%%%%%%%%%%%%%%%%%%%%%%%%%%%%%%%%%%%
\section{Conclusion}
We surveyed the basic properties of the PFC equation as a dynamical system in the framework of rigorous numerics. Thanks to an application of the radii polynomial approach, we were able to verify the existence of steady states close to numerically computed approximations. This provided us with important \textit{verified} information as to the behavior of the energy landscape, especially in terms of energetic relationships between steady states. We were also able to provide partial stability results with the caveat that they only applied to the cosine Fourier series. The Morse indices given were lower bounds in $\hp{2}$ - thus those steady states with Morse index higher than 0 must be unstable in $\hp{2}$.

Such ideas were applied in various regimes of the PFC equation to verify that certain important patterns are steady states (as opposed to metastable intermediates) including single atoms, other localized patterns and grain boundaries. In particular, we showed that two-mode PFC supports a non-zero misorientation grain boundary \textit{steady state} that we expect to be stable. We also showed the construction of the phase diagram with our fully nonlinear approach.

Finally, we used such results to further investigate the energy landscape through connections or orbits and through parameter continuation. Connections reveal the energetic and dynamical relationships between steady states, highlighting the behavior of unstable patterns as they reach states with lower energy. Continuation is especially useful to understand how the important states evolve across parameter space, highlighting the surprising behavior of the hexagonal lattice patterns and the snaking behavior of localized patterns. 

Our work suggests several interesting directions for future work. On one hand, our connection results could be made rigorous with a technique to prove orbits from unstable to stable manifolds. This is a complicated problem because the stable manifold is infinite dimensional and special techniques must be applied to properly parameterize its ``dominant'' submanifold. On the other, our continuation results could also be made rigorous or extended to 2-parameter continuation to reveal more interesting behavior. Alternatively, parameter continuation could be applied to the domain size, for example to investigate problems in elasticity.

%To conclude,  the PFC ``ecosystem'' is rich and has modern applications in the physics literature and the use of applied mathematics can help highlight  phenomena of interest.

%%%%%%%%%%%%%%%%%%%%%%%%%%%%%%%%%%%%%%%%%%%%%%%%%%%%%
%%         Main Text Ends Here                     %%
%%%%%%%%%%%%%%%%%%%%%%%%%%%%%%%%%%%%%%%%%%%%%%%%%%%%%
\bibliographystyle{ieeetr}
\bibliography{SP_References}

%%%%%%%%%%%%%%%%%%%%%%%%%%%%%%%%%%%%%%%%%%%%%%%%%%%%%
%%         Appendices                              %%
%%%%%%%%%%%%%%%%%%%%%%%%%%%%%%%%%%%%%%%%%%%%%%%%%%%%%
\newpage
\appendix
\section*{Appendices}
\addcontentsline{toc}{section}{Appendices}
\renewcommand{\thesubsection}{\Alph{subsection}}

\subsection{PFC ansatz in 2D}
\label{sm:PFCAnsatz}
PFC simulations can be classified according to a small number of regimes or ansatz that represent the (expected) global minimizer. The choice of such candidates is motivated by numerical experiments but can be obtained analytically from techniques such as linear stability analysis. Consider a periodic ``single Fourier mode'' phase field of the form
\begin{equation*}
\psi(x, y) = \pbar + A_1 \cos(y) + A_2 \cos \left( \frac{\sqrt{3}}{2} x - \frac{1}{2} y \right) + A_3 \cos \left( \frac{\sqrt{3}}{2} x + \frac{1}{2} y \right)
\end{equation*}
on the rectangular domain $[0, 4\pi/\sqrt{3}] \times [0, 4\pi]$. Inserting this ansatz into the PFC energy yields an expression $E[A_1, A_2, A_3]$ that can be optimized in the three amplitudes. This procedure yields three main classes of states that are well-known in the PFC literature.

\begin{itemize}
\item The constant state $A_1 = A_2 = A_3 = 0$.
\item The stripes state $A_2 = A_3 = 0$ and $A_1 = \frac{2}{\sqrt{3}} \sqrt{\beta - 3\pbar^2}$.
\item The hexagonal lattice states $A_1 = A_2 = A_3 = \frac{-2\pbar}{5} \pm \frac{2}{\sqrt{15}} \sqrt{\beta - \frac{12}{5} \pbar^2}$.
\end{itemize}
In addition, we also find a ``checkers state'' where $A_1$ and $A_2 = A_3$ are given as more complicated expressions. The two hexagonal lattices differ in energy: the positive choice is called the ``donuts'' state while the negative one is the ``atoms'' state. We can compare the energies directly to show that the checkers state is never optimal while the atoms state is more optimal than the donuts state for $\pbar > 0$. 

When the coefficients of two candidates are equal, they represent the same regime; for example, at $\beta = 3\pbar^2$, the constant, stripes and donuts states are all $\psi(x, y) = \pbar$. Similarly, the donuts and atoms states merge at $\beta = 12/5 \pbar^2$. We can also compute when states have the same energy. Such behavior occurs on transition curves of the form $\beta = \alpha \pbar^2$.

We can construct the phase diagram in Fig.~\ref{fig:AnsatzPhaseDiagram} by labeling with the expected global minimizer. We show in the main text that this ``linear'' description of PFC is a good approximation, at least for small $\beta$.

\begin{figure}
	\centering
	\subfloat[\hspace*{-1cm}]{\includegraphics[width=0.48\textwidth]{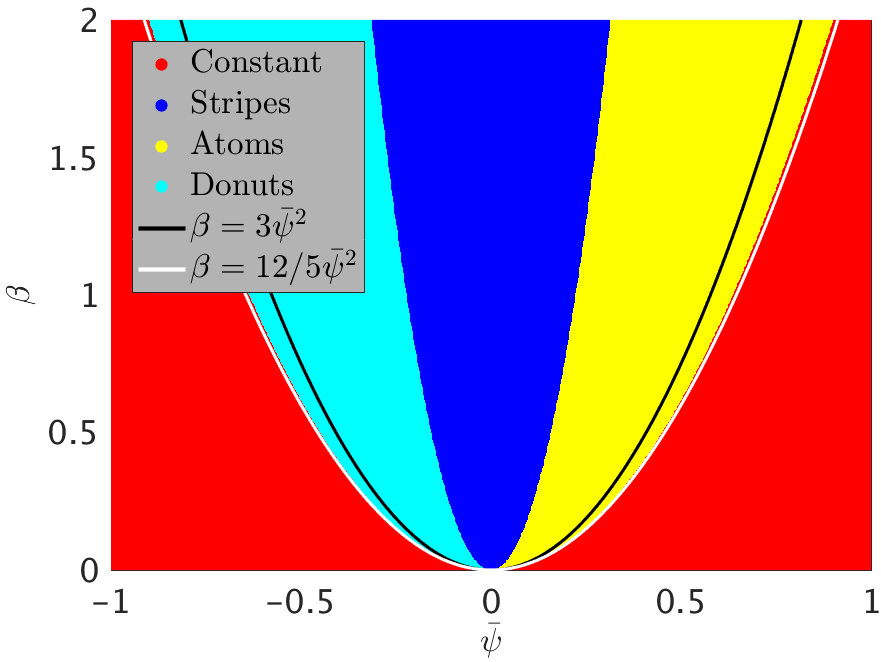}}\enskip
	\subfloat[\hspace*{-1.05cm}]{\includegraphics[width=0.48\textwidth]{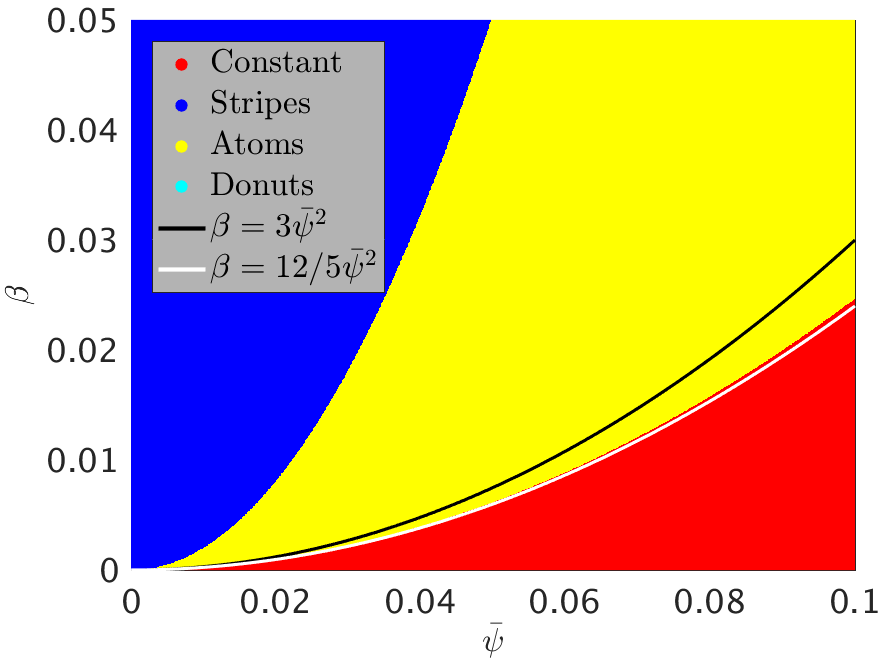}}
	
	\caption[Phase diagram from the basic ansatz]{Phase diagram (a) and detail (b) constructed by comparing the optimal energy of the four ansatz. The curves $\beta = 3\pbar^2 \midand \beta = 12/5 \pbar^2$ correspond respectively to the curves on which the amplitude of the stripes and lattice states become complex. The stripes-atoms (blue-yellow) transition curve is $\beta \approx 20.22 \pbar^2$ while the constant-atoms (red-yellow) transition curve is $\beta = 37/15 \pbar^2$. Donuts are never optimal for $\pbar > 0$. Checkers are never optimal.}
	\label{fig:AnsatzPhaseDiagram}
\end{figure}

%%%%%%%%%%%%%%%%%%%%%%%%%%%%%%%%%%%%%%%%%%%%%%%%%%%%%%%%%%%%%%%%%%%%%%%%%%%%%%%%%%%%%%%%%%%%%%%%%%%%%%%%%%
%%%%%%%%%%%%%%%%%%%%%%%%%%%%%%%%%%%%%%%%%%%%%%%%%%%%%%%%%%%%%%%%%%%%%%%%%%%%%%%%%%%%%%%%%%%%%%%%%%%%%%%%%%
\subsection{Proof of the radii polynomial theorem}
\label{sm:RadiiPolyProof}
\begin{proof}
Consider the Newton operator $T(a) = a - AF(a)$, then $T : X \to X$ and any fixed point $\atil$ of $T$ is a zero of $F$ because $A$ is injective. The derivative of $T$ is bounded and also Fr\'echet differentiable since we have for any $x \in X$
\begin{equation*}
\normbX{DT(x)} = \normbX{I - A DF(x)} \leq \normbX{I} + \normbX{A DF(x)} < \infinity
\end{equation*}
as $DF$ is the (bounded linear) Fr\'echet derivative of $F$. Now suppose $p(r_0) < 0$ for some $r_0 > 0$, then the radii polynomial in the main text gives
\begin{equation*}
Z_2(r_0)r_0 + Z_0 + Z_1 < 1 + p(r_0)/r_0 - Y_0/r_0 < 1
\end{equation*}
since $Y_0$ is positive.

Let $a \in \overline{B_{r_0}(\abar)}$, we can use the previous inequality to bound
\begin{equation*}
\begin{split}
\normbX{&DT(a)} = \normbX{I - AA^\dagger + AA^\dagger - ADF(\abar) + ADF(\abar) - ADF(a)} \\
	&\leq \normbX{I - AA^\dagger} + \normbX{A(A^\dagger - DF(\abar))} + \normbX{A(DF(\abar) - DF(a))} \\
	&\leq Z_0 + Z_1 + Z_2(r_0) r_0 \eqdot
\end{split}
\end{equation*}

Pairing this with the mean value inequality for $T$,
\begin{equation*}
\begin{split}
\normX{T(a) - \abar} &= \normX{T(a) - T(\abar) + T(\abar) - \abar} \\
	&\leq \sup_{z \in \overline{B_{r_0}(\abar)}} \normbX{DT(z)} \normX{a-\abar} + \normX{AF(\abar)} \\
	&\leq \left( Z_0 + Z_1 + Z_2(r_0) r_0 \right) r_0 + Y_0 \\
	&= p(r_0) + r_0 < r_0
\end{split}
\end{equation*}
hence $T$ maps $\overline{B_{r_0}(\abar)}$ to its \textit{interior} thanks to the strict inequality. Similarly for $x, y \in \overline{B_{r_0}(\abar)}$,
\begin{equation*}
\begin{split}
\normX{T(x) - T(y)} &\leq \sup_{z \in \overline{B_{r_0}(\abar)}} \normbX{DT(z)} \normX{x-y} \\
	&\leq (Z_0 + Z_1 + Z_2(r_0) r_0) \normX{x-y} < \normX{x-y}
\end{split}
\end{equation*}
so $T: \overline{B_{r_0}(\abar)} \to B_{r_0}(\abar)$ is a contraction with constant $\kappa = (Z_0 + Z_1 + Z_2(r_0) r_0) < 1$ and the Banach fixed-point theorem gives the result.
\end{proof}

%%%%%%%%%%%%%%%%%%%%%%%%%%%%%%%%%%%%%%%%%%%%%%%%%%%%%%%%%%%%%%%%%%%%%%%%%%%%%%%%%%%%%%%%%%%%%%%%%%%%%%%%%%
%%%%%%%%%%%%%%%%%%%%%%%%%%%%%%%%%%%%%%%%%%%%%%%%%%%%%%%%%%%%%%%%%%%%%%%%%%%%%%%%%%%%%%%%%%%%%%%%%%%%%%%%%%
\subsection{Computation of the radii polynomial bounds}
\label{sm:SimplifiedBounds}
In the following calculations, we will use usual results such as
\begin{equation*}
\normnu{Qb} \leq \normb{Q} \normnu{b} \eqcom \quad \normb{Q} = \sup_{\normnu{b} = 1} \normnu{Qb} \eqcom
\end{equation*}
and the following proposition to compute the norm of operators on $X$:\\
\begin{proposition}
\label{prop:QNorm}
\textit{Let $Q$ be an operator such that $Q_{\alpha, \sigma} = c_\alpha \delta_{\sigma_1-\alpha_1} \delta_{\sigma_2-\alpha_2}$ whenever $\alpha, \sigma \notin U = \{0, 1, ..., M\}^2$, then}
\begin{equation*}
\normb{Q} \leq \max_{\alpha \in U} \left\{ \frac{1}{\nu_\alpha} \sum_{\sigma \in U} |Q_{\alpha, \sigma}| \nu_\sigma \right\} + \sup_{\alpha \notin U} |c_\alpha| \eqdot
\end{equation*}
\end{proposition}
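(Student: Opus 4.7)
The plan is to reduce the estimate to a finite-dimensional block bound plus a diagonal-tail bound, both of which follow from a column-sum representation of the operator norm on $(\ellnu, \normnu{\cdot})$. First I would record the general identity
\begin{equation*}
\normb{Q} = \sup_{\alpha} \frac{1}{\nu_\alpha} \sum_{\sigma} |Q_{\alpha,\sigma}|\, \nu_\sigma,
\end{equation*}
valid for any bounded operator on this weighted $\ell^1$-type space with the paper's convention for the roles of the two indices. The upper bound comes from applying the triangle inequality inside $\normnu{Qb}$ and swapping the order of summation, while the matching lower bound is obtained by testing against the normalized basis sequences $\nu_\alpha^{-1} e_\alpha$.

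Next I would exploit the block structure. Splitting $\N^2 = U \cup (\N^2 \setminus U)$ induces a decomposition $Q = Q^{\mathrm{fin}} + Q^{\mathrm{tail}}$, where $Q^{\mathrm{fin}}$ retains the entries supported on $U \times U$ and $Q^{\mathrm{tail}}$ is the diagonal operator with multipliers $c_\alpha$ on the complement. This clean splitting is exact for the operators to which the proposition is applied in practice (the Newton-type operators $A$, $A^\dagger$ and composites such as $I - AA^\dagger$), since the ``mixed'' blocks (one index in $U$, one outside) vanish there by construction. The triangle inequality then gives $\normb{Q} \leq \normb{Q^{\mathrm{fin}}} + \normb{Q^{\mathrm{tail}}}$. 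Plugging $Q^{\mathrm{tail}}_{\alpha,\sigma} = c_\alpha \delta_{\sigma-\alpha}$ into the identity collapses the inner sum and yields $\normb{Q^{\mathrm{tail}}} = \sup_{\alpha \notin U} |c_\alpha|$, while restricting the identity to the finite $U \times U$ block recovers $\max_{\alpha \in U} \nu_\alpha^{-1} \sum_{\sigma \in U} |Q_{\alpha,\sigma}| \nu_\sigma$.

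The main thing to get right is the indexing convention: the stated bound is a ``max over $\alpha$ of a weighted sum over $\sigma$,'' which matches the column-sum formula once one commits to whether $\alpha$ or $\sigma$ plays the role of the column index in the paper's $Q_{\alpha,\sigma}$ notation. Once that convention is fixed, the proposition follows by a short direct computation; no appeal to the convolution Banach-algebra structure or to the specific PFC operators is needed, which keeps the estimate modular and reusable across the different $Y_0, Z_0, Z_1$ bounds required by the radii-polynomial argument.
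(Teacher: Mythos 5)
Your proof is correct and follows essentially the same route as the paper's: both rest on the triangle inequality applied to $\normnu{Qb}$ followed by an interchange of the order of summation, with the finite $U\times U$ block and the diagonal tail handled separately. Your packaging via the exact column-sum formula for the operator norm is a harmless refinement --- applied to $Q$ directly it even yields the sharper bound $\max\{K, C\}$ that the paper itself remarks on immediately after its proof.
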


\begin{proof}
Let $b \in X$, then $Qb$ can be decomposed as the action of the first finite block onto $b_\sigma$ for $\sigma \in U$ and infinitely many diagonal terms $c_\sigma b_\sigma$ for $\sigma \notin U$. The norm of $Q$ can then be written as the sum of two disjoint positive sums using the triangle inequality:
\begin{equation*}
\begin{split}
\normnu{Qb} &= \sum_{\sigma \in N^2} |(Qb)_\sigma| \nu_\sigma = \sum_{\sigma \in U} \left| \sum_{\alpha \in U} Q_{\alpha, \sigma} b_\alpha \right| \nu_\sigma + \sum_{\sigma \notin U} |c_\sigma b_\sigma| \nu_\sigma \\
	&\leq \sum_{\sigma \in U} \sum_{\alpha \in U} |Q_{\alpha, \sigma}| |b_\alpha| \nu_\sigma + \sum_{\sigma \notin U} |c_\sigma| |b_\sigma| \nu_\sigma \\
	&\leq \sum_{\alpha \in U} \left( \frac{1}{\nu_\alpha} \sum_{\sigma \in U} |Q_{\alpha, \sigma}| \nu_\sigma \right) |b_\alpha| \nu_\alpha + \sum_{\alpha \notin U} |c_\sigma| |b_\alpha| \nu_\alpha
\end{split}
\end{equation*}

The second term is bounded by the trivial bound $C = \sup_{\alpha \notin U} |c_\alpha|$, extracting the norm of $b$ over $\N \setminus U$. Similarly, the first term is bounded by
\begin{equation*}
\max_{\alpha \in U} \left\{ \frac{1}{\nu_\alpha} \sum_{\sigma \in U} |Q_{\alpha, \sigma}| \nu_\sigma \right\} \sum_{\alpha \in U} |b_\alpha| \nu_\alpha = K \sum_{\alpha \in U} |b_\alpha| \nu_\alpha \leq K \normnu{b} \eqdot
\end{equation*}

The norm of $Q$ in $B(\ellnu)$ is the supremum of the previous norm over all $b$ with unit norm, therefore the triangle inequality gives the result.
\end{proof}

The sharper result $\max\{K, C\}$ can be obtained by noting that the sums act on different subspaces of $\ellnu$. The estimate also allows us to compute the norm of \textit{finite} tensors by letting the $c_\alpha$ vanish. Let us now apply these results to compute the four necessary bounds.

%-----------------------------------------------------------------------------------
\subsubsection{$Z_0$ bound}
This bound is the easiest since by construction, $A \midand A^\dagger$ are approximate inverses up to numerical inversion errors. We then have
\begin{equation*}
Z_0 = \normb{I - AA^\dagger} = \normb{I^{(M)} - A^{(M)} G}
\end{equation*}
which can be evaluated using proposition~\ref{prop:QNorm}.

%-----------------------------------------------------------------------------------
\subsubsection{$Y_0$ bound}
We must compute $AF(\abar)$ so that most terms will be given by the finite product $A^{(M)} F^{(M)}(\abar)$. There still remain some non-zero convolution coefficients in the full $F(\abar)$: since $\abar^{(M)}$ has $M+1$ coefficients in each dimension, $L_\alpha (\abar*\abar*\abar)_\alpha$ will have $3M+1$ non-zero coefficients in each dimension. These are multiplied by the appropriate $L^{-1}_\alpha \gamma^{-1}_\alpha$, resulting in
\begin{equation*}
\normnu{AF(\abar)} \leq \normnu{A^{(M)} F^{(M)}(\abar)} + \sum_{\alpha \in \{0, 1, ..., 3M\}^2 \setminus U} \left| \frac{(\abar*\abar*\abar)_\alpha}{\gamma_\alpha} \right| \nu_\alpha \eqdot
\end{equation*}

%-----------------------------------------------------------------------------------
\subsubsection{$Z_2$ bound}
To compute the $Z_2$ bound, let $b, h \in X$ with $\normnu{h} = 1$ and consider first the effect of $DF(b)$ on $h$,
\begin{equation*}
(DF(b) h)_\alpha = \left. \frac{d}{ds} F_\alpha(b + s h) \right|_{s = 0} = L_\alpha \left(\gamma_\alpha h_\alpha + 3(b * b * h)_\alpha \right) \eqdot
\end{equation*}

Fix $r > 0$ and let $b = \abar + R$ where $\normnu{R} \leq r$, we then have that
\begin{equation*}
((DF(b)-DF(\abar)) h)_\alpha = 3 L_\alpha (b * b * h - \abar * \abar * h)_\alpha = 3 L_\alpha ((2\abar * R + R * R) * h)_\alpha \eqdot
\end{equation*}

Note that the initial factor $L_\alpha$ can instead be represented by the diagonal operator defined by $\Lambda_{\alpha, \sigma} = L_\alpha \delta_{\alpha_1-\sigma_1} \delta_{\alpha_2-\sigma_2}$. Then, using the fact that the convolution on $X$ is a Banach algebra,
\begin{equation*}
\begin{split}
\normb{A(DF(b)-DF(\abar))} &\leq 3 \normb{A\Lambda} \normnu{2\abar * R + R * R} \normnu{h} \\
	&\leq 3 \normb{A\Lambda} (2\normnu{\abar} + r) r
\end{split}
\end{equation*}
where the norm of $A\Lambda$ is computed using proposition~\ref{prop:QNorm} with the bound $\Gamma = \max(\gamma^{-1}_{0, M+1}, \gamma^{-1}_{M+1, 0})$ on the diagonal terms.\footnote{This can be done since $\gamma_\alpha$ is monotone decreasing as long as \textit{either} $\alpha_1$ or $\alpha_2$ can dominate the $-\beta$ term. For PFC, $\gamma_\alpha = (L_\alpha+1)^2-\beta$ so a sufficiently large $M$ ensures that $M/L_x$ or $M/L_y$ dominates $\beta$. This condition can be checked numerically.} This computation works for any $r > 0$, so we have
\begin{equation*}
Z_2(r) = 6\normb{A\Lambda} \normnu{\abar} + 3 \normb{A\Lambda} r = Z_2^{(0)} + Z_2^{(1)} r \eqdot
\end{equation*}

%-----------------------------------------------------------------------------------
\subsubsection{$Z_1$ bound}
For the final bound, we now consider the action of $A^\dagger$ on the same vector $h$:
\begin{equation*}
(A^\dagger h)_\alpha = \begin{cases}
	h_\alpha \quad& \text{if } \alpha = (0, 0) \\
	\sum_\sigma G_{\alpha, \sigma} h_\sigma \quad& \text{if } \alpha \in U \setminus \{(0,0)\} \\
	L_\alpha \gamma_\alpha h_\alpha \quad& \text{otherwise}
\end{cases}
\end{equation*}

Let $\eta$ be the tail of $h$, i.e. the vector with the same entries as $h$ outside of $U$ and $0$ on $U$. We then have:
\begin{equation*}
((DF(\abar) - A^\dagger) h)_\alpha = \begin{cases}
	0 \quad& \text{if } \alpha = (0, 0) \\
	3 L_\alpha (\abar*\abar*\eta)_\alpha \quad& \text{if } \alpha \in U \setminus \{(0,0)\} \\
	3 L_\alpha (\abar*\abar*h)_\alpha \quad& \text{otherwise}
\end{cases}
\end{equation*}

Consider now the action of $A$ on the difference above. The first block of the difference will be multiplied by the inverse of $G$ while the tail will be multiplied by the appropriate $L^{-1}_\alpha \gamma^{-1}_\alpha$, thus
\begin{equation*}
\normnu{A(DF(\abar) - A^\dagger) h} \leq 3 \sum_{\sigma \in U} \sum_{\alpha \in U} \left| A_{\alpha, \sigma} L_\alpha (\abar*\abar*\eta)_\alpha \right| \nu_\sigma + 3 \sum_{\sigma \in \Z^2 \backslash U} \left| \frac{(\abar*\abar*h)_\sigma}{\gamma_\sigma} \right| \nu_\sigma
\end{equation*}
using $L_{0,0} = 0$ and the triangle inequality for the first term.

Let $\phi \in X$ be such that $|(\abar * \abar * \eta)_\alpha| \leq \phi_\alpha$ whenever $\alpha \in U$ and $0$ otherwise. Using the Banach algebra property and the bound $\Gamma \geq |\gamma_\sigma|^{-1}$ to bound the infinite sum, we have
\begin{equation*}
\begin{split}
\normb{A(DF(\abar) - A^\dagger)} &\leq 3 \sum_{\sigma \in U} |(A\Lambda \phi)_\sigma| \nu_\sigma + 3 \Gamma \normnu{\abar}^2\\
&= 3 \normnu{A\Lambda \phi} + 3 \Gamma \normnu{\abar}^2 = Z_1
\end{split}
\end{equation*}
which can be computed numerically once the (finitely many) $\phi_\alpha$ have been obtained. To compute them, we now shift for a moment to $\Z^2$ and extend all vectors appropriately. Now let $q = \abar*\abar$, then
\begin{equation*}
\begin{split}
|(q * \eta)_\alpha| &\leq \left| \sum_{\sigma \in \Z^2} q_{\alpha-\sigma} \eta_\sigma \right| \leq \sum_{\sigma \in V(\alpha) \cap W} \left(\frac{|q_{\alpha-\sigma}|}{\nu^{|\sigma|}} \right) |h_\sigma| \nu^{|\sigma|} \\
	&\leq \sum_{\sigma \in V(\alpha) \cap W} \left(\sup_{\tau \in V(\alpha) \cap W} \frac{|q_{\alpha-\tau}|}{\nu^{|\tau|}} \right) |h_\sigma| \nu^{|\sigma|} \\
	&\leq \sup_{\sigma \in V(\alpha) \cap W} \frac{|q_{\alpha-\sigma}|}{\nu^{|\sigma|}} \normnu{h}
\end{split}
\end{equation*}
where $V(\alpha), W \subset \Z^2$ are the regions over which $q_{\alpha-\sigma}$ and $\eta$ are non-zero respectively. Since $q_\tau = 0$ whenever either $|\tau_1|$ or $|\tau_2|$ is larger than $2M$ and we only need $|\alpha| \in U$, we obtain the overestimate that $V(\alpha) \subset \{-3M, ..., 3M\}^2$. Further, $\eta_\sigma$ must vanish for $|\sigma| \in U$ so
\begin{equation*}
\phi_\alpha = \max_{\sigma \in \{-3M, ..., 3M\}^2 \setminus \{-M, ..., M\}^2} \frac{|(\abar*\abar)_{|\alpha_1-\sigma_1|, |\alpha_2-\sigma_2|}|}{\nu^{|\sigma|}}
\end{equation*}
for $\alpha \in U$ (and $0$ otherwise), which completes the computation of $Z_1$.

%%%%%%%%%%%%%%%%%%%%%%%%%%%%%%%%%%%%%%%%%%%%%%%%%%%%%%%%%%%%%%%%%%%%%%%%%%%%%%%%%%%%%%%%%%%%%%%%%%%%%%%%%%
%%%%%%%%%%%%%%%%%%%%%%%%%%%%%%%%%%%%%%%%%%%%%%%%%%%%%%%%%%%%%%%%%%%%%%%%%%%%%%%%%%%%%%%%%%%%%%%%%%%%%%%%%%
\subsection{Energy computation and real space norms}
\label{sm:RigorousEnergy}
The notion of closeness between $\atil$ and $\abar$ extends to their energies. For simplicity, we only handle the basic PFC energy
\begin{equation*}
E[\psi] = \dashint_\Omega \frac{1}{2} (\lapl \psi + \psi)^2 + \frac{1}{4}(\psi^2-\beta)^2 \eqdot
\end{equation*}

We will write without relabeling $E[\psi_{(a)}] = E[a]$ when $\psi_{(a)}$ is the phase field corresponding to the Fourier coefficients $a$. Taking the average of a Fourier series returns its constant mode such that
\begin{equation*}
\begin{split}
E[a] = \left( \frac{1}{2} \left(La+a \right) * \left(La+a \right) + \frac{1}{4} \left(a*a - \beta \delta_{\alpha_1} \delta_{\alpha_2} \right) * \left(a*a - \beta \delta_{\alpha_1} \delta_{\alpha_2} \right) \right)_{0,0} \eqdot
\end{split}
\end{equation*}

In the context of the radii polynomial approach, let $t = \atil - \abar$ such that
\begin{equation*}
\begin{split}
E[\atil] &- E[\abar] = E[\abar + t] - E[\abar] \\
	&= \left( \frac{1}{2} \left( 2t + Lt \right) * (Lt) + \left( \abar + L\abar \right) * (Lt) + \left( (1 - \beta) \abar + \abar*\abar*\abar + L\abar \right) * t \right. \\
	&\enskip \left. + \frac{1-\beta}{2} t*t + \frac{3}{2} \abar*\abar * t*t + \abar * t*t*t + \frac{1}{4}t*t*t*t \right)_{0,0} \eqdot
\end{split}
\end{equation*}

We can simplify $\left( \abar + L\abar \right) * (Lt) = \left( L\abar + L^2\abar \right) * t$ by integrating by parts and using the periodic boundary conditions; for example,
\begin{equation*}
\left( a*(Lb) \right)_{0,0} = \dashint_\Omega \psi_{(a)} \lapl \psi_{(b)} = \dashint_\Omega \lapl \psi_{(a)} \psi_{(b)} = \left( (La)*b \right)_{0,0} \eqdot
\end{equation*}

We now use the fact that $\normnu{t} < r_*$. In addition, we can overestimate $|a_{0,0}| \leq \normnu{a}$ and use the Banach algebra property to obtain the following bound:
\begin{equation*}
\begin{split}
\left| E[\atil] - E[\abar] \right| &\leq \frac{1}{2} \left| \left( \left( 2t + L t \right) * (L t) \right)_{0,0} \right| \\
	&+ \left( |1 - \beta| \enskip \normnu{\abar} + \normnu{\abar}^3 + 2 \normnu{L \abar} + \normnu{L^2 \abar} \right) r_* \\
	&+ \frac{1}{2} \left( |1-\beta| + 3 \normnu{\abar}^2\right) r_*^2 + \normnu{\abar} r_*^3 + \frac{1}{4} r_*^4
\end{split}
\end{equation*}

The term strictly in $t$ has been left as a convolution because it is necessary to control the growth of $L_\alpha$ with the $r_*$ bound directly. To do so, we have
\begin{equation*}
|S| = \left| \sum_{\alpha \in \Z^2} (2t_\alpha + L_\alpha t_\alpha) (L_{-\alpha} t_{-\alpha}) \right| = \left| \sum_{\alpha \in \N^2} W_\alpha (2L_\alpha + L_\alpha^2) t_\alpha^2 \right|
\end{equation*}
which is another way to obtain the previous integration by parts result. Now, the $\ellnu$ norm of $t$ is bounded by $r_*$; each member of the sum satisfies the inequality $W_\alpha |t_\alpha| \nu^{|\alpha|} < r_*$. Overestimating $W_\alpha \geq 1$, we can write
\begin{equation*}
|S| < \sum_{\alpha \in \N^2} W_\alpha (2|L_\alpha| + L_\alpha^2) \left( \frac{r_*}{W_\alpha \nu^{|\alpha|}} \right)^2 < r_*^2 \sum_{\alpha \in \N^2} (2|L_\alpha| + L_\alpha^2) \rho^{|\alpha|}
\end{equation*}
where $\rho = 1/\nu^2 < 1$. To evaluate this sum, we can compute the polynomial geometric series
\begin{equation*}
\sum_{j=0}^\infinity \rho^j = \frac{1}{1-\rho} \eqcom \quad
\sum_{j=0}^\infinity j^2 \rho^j = \frac{\rho^2 + \rho}{(1-\rho)^3} \eqcom \quad
\sum_{j=0}^\infinity j^4 \rho^j = \frac{\rho^4 + 11\rho^3 + 11\rho^2 + \rho}{(1-\rho)^5}
\end{equation*}
which all converge for $\rho < 1$. Note that the sums can be evaluated by differentiating $\sum \rho^{jx}$ with respect to $x = 1$. The terms in $L_\alpha$ can then be expanded and written in such a fashion and assuming that $|S|$ is finite, the sums can be split and separated. We have
\begin{equation*}
\begin{split}
S_1 = \sum_{\alpha \in \N^2} |L_\alpha| \rho^{|\alpha|} &= \sum_{j,k \in \N} \left(\left(\frac{2\pi}{L_x} \right)^2 j^2 + \left(\frac{2\pi}{L_y}\right)^2 k^2 \right) \rho^j \rho^k \\
	&= \left(\left(\frac{2\pi}{L_x} \right)^2 + \left(\frac{2\pi}{L_y}\right)^2 \right) \sum_{j,k \in \N} j^2 \rho^j \rho^k \\
	&= |L_{1,1}| \left(\sum_{j \in \N} j^2 \rho^j \right) \left(\sum_{k \in \N} \rho^k \right) \\
	&= |L_{1,1}| \frac{\rho^2 + \rho}{(1-\rho)^3} \cdot \frac{1}{1-\rho} = |L_{1,1}| \frac{\rho^2 + \rho}{(1-\rho)^4}
\end{split}
\end{equation*}
and similarly,
\begin{equation}
\label{eq:S2Computation}
\begin{split}
S_2 &= \sum_{\alpha \in \N^2} L_\alpha^2 \rho^\alpha \\
	&= (L_{1,0}^2 + L_{0,1}^2) \frac{\rho^4 + 11\rho^3 + 11\rho^2 + \rho}{(1-\rho)^6} + 2 |L_{1,0}L_{0,1}| \frac{\rho^4 + 2\rho^3 + \rho^2}{(1-\rho)^6} \eqdot
\end{split}
\end{equation}

Putting everything together, we arrive at the bound
\begin{equation*}
\begin{split}
|E[\atil] - E[\abar]| &\leq \left( |1 - \beta| \enskip \normnu{\abar} + \normnu{\abar}^3 + 2 \normnu{L \abar} + \normnu{L^2 \abar} \right) r_* \\
	&+ \frac{1}{2} \left(2S_1 + S_2 + |1-\beta| + 3 \normnu{\abar}^2 \right) r_*^2 + \normnu{\abar}r_*^3 + \frac{1}{4} r_*^4
\end{split}
\end{equation*}
which can now be computed numerically. This bound depends strongly on $\nu$ because of its influence on $r_*$ and the growth of $S_1 \midand S_2$. In principle, one could find an optimal $\nu$ that ensures the bound is as small as possible for a given $\abar$ and a fixed $\beta$.

When the numerical errors associated to the $E[\abar]$ computations are added to the energy bound, both computed with interval arithmetic, we obtain an interval that is guaranteed to contain the energy of $\atil$ itself. In particular, this allows us to prove which of two steady states is more optimal\footnote{The energy intervals must be disjoint for such statements to hold. Otherwise, the proof parameters must be improved to tighten the intervals.} strictly from numerical computations.

The previous computations illustrate some techniques that allow us to estimate the norm of $\psi_{(t)}$ (the phase field corresponding to $\atil - \abar$) in terms of $r_*$. For example,
\begin{equation}
\label{eq:SupNormBoundRadius}
\begin{split}
\norm{\psi_{(t)}}_\infinity &= \sup_{\rmbf{x} \in \Omega} |\psi_{(t)}(\rmbf{x})| = \sup_{\rmbf{x} \in \Omega} \left| \sum_{\alpha \in \N^2} W_\alpha t_\alpha \cos \left( \frac{2\pi \alpha_1}{L_x} x \right) \cos \left( \frac{2\pi \alpha_2}{L_y} y \right) \right| \\
	&\leq \sum_{\alpha \in \N^2} W_\alpha |t_\alpha| \leq \sum_{\alpha \in \N^2} W_\alpha |t_\alpha| \nu^{|\alpha|} = \normnu{t} < r_*
\end{split}
\end{equation}
provides a pointwise estimate on the value of the exact steady state. Further, a simple calculation shows that Parseval's identity holds on $X$; i.e.\ $\norm{\psi_{(a)}}_{\lp{2}(\Omega)}^2 = |\Omega| \sum_{\alpha \in \N} W_\alpha a_\alpha^2$. We can then bound the $\lp{2}$ norm of derivatives, for instance
\begin{equation*}
\normp{\lapl \psi_{(t)}}{2}^2 = |\Omega| \sum_{\alpha \in \N} L_\alpha^2 t_\alpha^2 \leq S_2 |\Omega| r_*^2
\end{equation*}
using Eq.~(\ref{eq:S2Computation}). Similar results can be built for the lower norms, thus providing an estimate of the form
\begin{equation*}
\normhp{\psi_{(t)}}{2} \leq C(\Omega, \nu) r_*
\end{equation*}
for some constant $C$ that could be computed if necessary. Note the implicit dependence on the state itself and $(\pbar, \beta)$ through $r_*$. Combined with the $\lp{\infinity}$ bound, this shows that as long as $\nu > 1$, the exact steady state will be in $\hp{2}$ and can differ from the numerical candidate by at most $r_*$ at any point in $\Omega$. The constant $C$ may be large, but it does not affect the pointwise agreement; this is sufficient control for our numerical investigation.

%%%%%%%%%%%%%%%%%%%%%%%%%%%%%%%%%%%%%%%%%%%%%%%%%%%%%%%%%%%%%%%%%%%%%%%%%%%%%%%%%%%%%%%%%%%%%%%%%%%%%%%%%%
%%%%%%%%%%%%%%%%%%%%%%%%%%%%%%%%%%%%%%%%%%%%%%%%%%%%%%%%%%%%%%%%%%%%%%%%%%%%%%%%%%%%%%%%%%%%%%%%%%%%%%%%%%
\subsection{Stability in $X$}
\label{sm:RigorousStability}
To complete the analysis of a given steady state $\atil$, we can characterize its stability in $X$. This is powerful because even linear results are mostly limited to trivial states but a major limitation is that this does not transfer to $\hp{2}$ because $X$ is restricted to the \textit{cosine} series.

Suppose we have a steady state $\atil \in B_r(\abar)$ for a verified radius $r$, then stability is controlled by the spectrum of $DF(\atil)$. This spectrum is real because we are in the context of a gradient flow; this can be seen directly from the definition of $A^\dagger \midand DF$ which are symmetric on interchanging indices. Assuming there are no zero eigenvalues, the positive and negative ones define the \textit{unstable} and \textit{stable manifolds} respectively. A steady state with only strictly negative eigenvalues is said to be stable.

While only the approximation $A^\dagger$ is known in practice, it has the same \textit{signature} as $DF(\atil)$ itself; i.e.\ they have exactly as many strictly positive or strictly negative eigenvalues. We compute the spectrum of $A^\dagger$ in two parts. The $(M+1)^2$ eigenvalues of the finite block $G = DF^{(M)}(\abar)$ can be computed numerically and verified using interval arithmetic routines.\footnote{This verification is numerically costly for large truncation order; when $M > 40$, we only verify the eigenvalues that are larger than some arbitrary lower bound, say $-1$} Most eigenvalues can be unequivocally assigned a sign, but some may be identically $0$ or closer to $0$ than the available precision. Stability cannot be ascertained in such cases, but assuming $\abar$ was verified using the radii polynomial approach, $G$ must be sufficiently well-conditioned so its eigenvalues cannot be so small.

In the tail, the eigenvalues are simply equal to the diagonal terms $L_\alpha \gamma_\alpha$ with $\alpha \notin U$. Thankfully, $L$ is strictly negative for $\alpha \neq (0,0)$ and $\gamma$ is strictly positive as long as $M$ is sufficiently large. We then have
\begin{equation*}
\sigma(A^\dagger) = \sigma(G) \cup \left\{L_\alpha \gamma_\alpha \right\}_{\alpha \in \N^2 \setminus U}
\end{equation*}
which can be split into a \textit{finite} number of positive eigenvalues and infinitely many negative eigenvalues, assuming there are no small eigenvalues. The equivalence with $\sigma(DF(\atil))$ follows from a homotopy argument. Let $H_s = (1-s) A^\dagger + s DF(\atil)$ for $s \in [0,1]$, then
\begin{equation*}
\begin{split}
\normb{I &- AH_s} = \normb{I - AA^\dagger - s A (DF(\abar) - A^\dagger) + s A (DF(\abar) - DF(\atil))} \\
	&\leq \normb{I - AA^\dagger} + s \normb{A(DF(\abar) - A^\dagger)} + s \normb{A(DF(\abar)-DF(\atil))}\\
	&\leq Z_0 + s Z_1 + s Z_2(r) r \leq Z_0 + Z_1 + Z_2(r) r < 1
\end{split}
\end{equation*}
as in the proof of the radii polynomial approach. Since $I-AH_s$ is a bounded operator with norm less than $1$, $AH_s$ is itself invertible. $AH_s$ and thus $H_s$ cannot have a zero eigenvalue so that its signature must stay constant for all $s$. This shows that $A^\dagger \midand DF(\atil)$ have the same signature and this is in fact true over the ball of radius $r^*$ around $\abar$.

\end{document}